\newtheorem{theorem}{Theorem}
\newtheorem{claim}[theorem]{Claim}
\newtheorem*{claim*}{Claim}
\newtheorem{corollary}[theorem]{Corollary}
\newtheorem{lemma}[theorem]{Lemma}
\newtheorem{proposition}[theorem]{Proposition}
\numberwithin{equation}{section}
\numberwithin{theorem}{section}
\numberwithin{case}{section}
\numberwithin{subcase}{case}
\theoremstyle{definition}
\newtheorem{algonice}[theorem]{Algorithm}
\def\eps{\varepsilon}
\def\NN{\mathbb{N}}
\def\tildadeg{\rho}
\def\dfn{:=}
\newenvironment{proofclaim}[1][Proof of the claim]{\begin{proof}[#1]}{\end{proof}}
\title{Density of monochromatic infinite paths} 
\author{Allan Lo and Nicolás Sanhueza-Matamala}
\address[Allan Lo, Nicolás Sanhueza-Matamala]{School of Mathematics,\\ University of Birmingham,\\ Birmingham B15 2TT,\\ United Kingdom}
\email{s.a.lo@bham.ac.uk, NIS564@bham.ac.uk}
\author{Guanghui Wang}
\address[Guinghui Wang]{School of Mathematics, \\ Shandong University, \\ Jinan, 250100, China}
\email{ghwang@sdu.edu.cn}
\thanks{The research leading to these results was partially supported by EPSRC, grant no. EP/P002420/1 (A.~Lo), the Becas Chile scholarship scheme from CONICYT (N.~Sanhueza-Matamala) and NSFC No. 11471193, 11631014 (G.~Wang).}
\begin{document}	

\begin{abstract}
	For any subset $A \subseteq \mathbb{N}$, we define its upper density  to be $\limsup_{ n \rightarrow \infty } |A \cap \{ 1, \dotsc, n \}| / n$.
	We prove that every $2$-edge-colouring of the complete graph on $\mathbb{N}$ contains a monochromatic infinite path, whose vertex set has upper density at least $(9 + \sqrt{17})/16 \approx 0.82019$.
	This improves on results of Erd\H{o}s and Galvin, and of DeBiasio and McKenney.
\end{abstract}

\maketitle

\section{Introduction}

A $2$-edge-colouring of a graph~$G$ is an assignment of 2 colours, red and blue, to each edge of~$G$. 
We say that $G$ is \emph{monochromatic} if all the edges of~$G$ are coloured with the same colour.
Given an arbitrary $2$-edge-colouring of $K_n$, what is the size of the largest monochromatic path contained as a subgraph?
This was answered by Gerencs\'er and Gy\'arf\'as~\cite{GerencserGyarfas1967}, who proved that every $2$-edge-coloured~$K_n$ contains a monochromatic path of length at least~$2n/3$.
This result is sharp. 

Now consider the infinite complete graph~$K_{\mathbb{N}}$ on the vertex set $\mathbb{N}$.
For any subset $A \subseteq \mathbb{N}$, the \emph{upper density} $\overline{d}(A)$ of $A$ is defined as \[ \overline{d}(A) \dfn \limsup_{n \rightarrow \infty} \frac{|A \cap \{ 1, \dotsc, n \}|}{n}. \]
Given a subgraph~$H$ of~$K_{\mathbb{N}}$, we define the \emph{upper density $\overline{d}(H)$ of~$H$} to be that of $V(H)$.
Trying to generalise the results known in the finite case, it is natural to ask what are the densest paths which can be found in any $2$-edge-coloured $K_{\mathbb{N}}$.
This problem was considered first by Erd\H{o}s and Galvin~\cite{ErdosGalvin1993}.
Other variants of this problem have been studied as well.
For example, it is possible to consider other monochromatic subgraphs rather than paths, edge-colourings with more than two colours, use different notions of density or consider monochromatic sub-digraphs of infinite edge-coloured digraphs, etc.
Results along these lines have been obtained by Erd\H{o}s and Galvin~\cite{ErdosGalvin1991, ErdosGalvin1993}, DeBiasio and McKenney~\cite{DeBiasioMcKenney2016} and Bürger, DeBiasio, Guggiari and Pitz~\cite{Guggiari2017}.

We focus on the case of monochromatic paths in $2$-edge-coloured complete graphs.
By a classical result of Ramsey Theory, any $2$-edge-colouring of~$K_{\mathbb{N}}$ contains a monochromatic infinite complete graph, and therefore, also a monochromatic infinite path~$P$.
However, this argument alone cannot guarantee a monochromatic path with positive upper density, as it was shown by Erd\H{o}s~\cite{Erdos1964} that there exist $2$-edge-colourings of the infinite complete graph where every infinite monochromatic complete subgraph has upper density zero.
Rado~\cite{Rado1978} showed that in every $r$-edge-coloured $K_{\mathbb{N}}$ there are $r$ monochromatic paths, of distinct colours, which partition the vertex set.
This immediately implies that every $2$-edge-coloured $K_\mathbb{N}$ contains an infinite monochromatic path $P$ with $\overline{d}(P) \ge 1/2$.

Erd\H{o}s and Galvin~\cite{ErdosGalvin1993} proved that for every $2$-edge-colouring of $K_{\mathbb{N}}$ there exists a monochromatic path $P$ with $\overline{d}(P) \ge 2/3$ and showed an example of a $2$-edge-colouring of $K_{\mathbb{N}}$ such that every monochromatic path satisfies $\overline{d}(P) \leq 8/9$.
DeBiasio and McKenney~\cite{DeBiasioMcKenney2016} improved the lower bound and showed that for every $2$-edge-colouring of $K_{\mathbb{N}}$, there exists a monochromatic path $P$ with $\overline{d}(P) \ge 3/4$.
In this paper, we improve the lower bound on $\overline{d}(P)$. 

\begin{theorem} \label{theorem:main}
	Every $2$-edge-colouring of $K_{\mathbb{N}}$ contains a monochromatic path $P$ with $\overline{d}(P) \ge (9 + \sqrt{17})/16 \approx 0.82019$.
\end{theorem}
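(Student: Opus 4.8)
The plan is to fix an arbitrary $2$-edge-colouring of $K_{\mathbb{N}}$ and to produce one monochromatic path of upper density at least $c \dfn (9+\sqrt{17})/16$. The first ingredient is a structural analysis in the spirit of the finite result of Gerencs\'er and Gy\'arf\'as quoted above. Using the classical fact that in a $2$-edge-coloured complete graph at least one colour spans a connected subgraph, together with a pigeonhole refinement that keeps track of upper densities, one should locate, for one of the two colours --- say red --- a set $A \subseteq \mathbb{N}$ that is \emph{robustly red-connected}: the red graph on~$A$ is connected, and moreover any bounded number of vertices of~$A$ can be bypassed when routing a red path through~$A$. At the same time one records $\overline{d}(A)$ and the way the vertices of $\mathbb{N}\setminus A$ distribute between the two colours relative to~$A$; these quantities are the parameters of the optimisation carried out at the end.

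The second ingredient is the path-building engine. Given the set~$A$, I would build a red path~$P$ by processing longer and longer initial segments $\{1,\dotsc,n_1\}\subseteq\{1,\dotsc,n_2\}\subseteq\cdots$ and, at stage~$i$, extending the current path so that it covers all but a bounded number of the vertices of~$A$ seen so far: a Gerencs\'er--Gy\'arf\'as-type estimate applied inside the finite segment guarantees that a $2/3$-or-better fraction of the relevant vertices can be collected locally, and the robust red-connectivity of~$A$ lets one splice the newly collected portion onto the old path through a bounded set of red connector vertices. Because the loss at each splice is only $O(1)$, it has no effect on the upper density, so $P$ inherits, up to $o(1)$, whatever density the local estimate guarantees inside~$A$. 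If this already reaches~$c$, we are done.

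The heart of the matter is the complementary regime, in which no colour carries a robustly connected set dense enough for the previous step to reach~$c$. Then the large red-connected and blue-connected pieces are tightly constrained in density, so on the complement of a set whose upper density is controlled by the parameter from the first step the remaining colour becomes highly structured --- containing, for instance, a complete multipartite subgraph spanning a dense set, from which a near-spanning monochromatic path is routine --- and the engine above extracts a monochromatic path there too. Comparing the lower bound obtained in this regime with the one from the first regime, viewed as functions of the density parameter, one finds that the worst colouring is the one balancing the two, and I would expect the balance point to be governed by $c(9-8c)=2$, that is $8c^2-9c+2=0$, whose relevant root is exactly $(9+\sqrt{17})/16$. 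Arranging the split optimally is precisely what produces this constant rather than a larger one.

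The step I expect to be the main obstacle is the density bookkeeping linking the last two paragraphs. One must ensure that the path produced reaches its density along infinitely many scales (a $\limsup$ statement) and not merely along a sparse set of scales (a $\liminf$ statement); this forces the splicing in the engine to be carried out along the \emph{same} scales on which the local Gerencs\'er--Gy\'arf\'as estimate is nearly tight. One must also pin down exactly how much density in one colour has to be conceded in order to make the other colour structured, since that trade-off is an infinite, density-weighted analogue of the extremal configuration for the finite theorem, and it is this trade-off --- not the path construction itself --- that is being optimised to obtain the value $(9+\sqrt{17})/16$.
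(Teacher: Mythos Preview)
Your outline is not a proof; several of the load-bearing steps are assertions rather than arguments, and at least one of them is structurally off. First, the dichotomy you set up --- either some colour carries a ``robustly connected'' set dense enough to reach~$c$, or else the other colour becomes so structured that a near-spanning path falls out --- is not the dichotomy that actually occurs. In the paper (and in earlier work of Erd\H{o}s--Galvin and DeBiasio--McKenney) the connectivity reduction is all-or-nothing: either any two same-colour vertices can be joined by a path of that colour avoiding any finite set, or the failure of this property immediately yields a monochromatic path of upper density~$1$. There is no density parameter attached to the connected set and hence nothing to balance against a ``complementary regime''; the constant $(9+\sqrt{17})/16$ does \emph{not} arise as the crossing point of two competing estimates of the kind you describe.

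Second, your engine --- apply a Gerencs\'er--Gy\'arf\'as estimate inside $[n_i]$ and splice --- only delivers whatever the local estimate delivers, and the finite Gerencs\'er--Gy\'arf\'as bound is $2/3$. You give no mechanism for pushing past $3/4$ (the DeBiasio--McKenney bound), let alone to $(9+\sqrt{17})/16$; the sentence ``I would expect the balance point to be governed by $8c^{2}-9c+2=0$'' is reverse-engineered from the target and not derived from any inequality you have written down. In the paper the constant comes from an entirely different place: one builds, via an explicit greedy algorithm on a \emph{restricted} colouring (every vertex has infinite degree in exactly one colour), monochromatic path-forests $F^{R}_{t},F^{B}_{t}$ covering most of $[t]$, tracking four types of vertices ($W,X,Y,Z$) and auxiliary potentials $\rho^{\ast}_{t}(A^{\ast}_{t})$; the constant is then extracted from a system of four linear inequalities in the sizes of the $X\cup Z$ sets at carefully chosen times, shown infeasible by exhibiting a dual vector. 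None of this apparatus --- the restricted-colouring reduction, the path-forest rather than path target, the forward/backward-edge algorithm, the $W/X/Y/Z$ bookkeeping, or the final linear-programming certificate --- appears in your sketch, and without something playing its role there is no route from your ingredients to the stated bound.
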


In Section~\ref{section:pathforest} we state our main lemma (Lemma~\ref{lemma:nopurplevertices}) and use it to deduce Theorem~\ref{theorem:main}.
In Section~\ref{section:tools} we collect some useful tools that will be used during the proof of Lemma~\ref{lemma:nopurplevertices}, which is done in Section~\ref{section:algorithm}.

\subsection{Notation}

Given a graph $G$, we write $V(G)$ and $E(G)$ for its vertex and edge set, respectively; and $e(G) \dfn |E(G)|$.
Given $S \subseteq V(G)$, we write $G[S]$ for the subgraph of $G$ induced by $S$.
If $S, T \subseteq V(G)$ are disjoint, we write $G[S,T]$ for the bipartite graph with classes~$S$ and~$T$ consisting precisely of those edges in $G$ with one endpoint in $S$ and the other in $T$.

Let $G$ be a $2$-edge-coloured graph. 
Throughout the paper, we assume its colours to be red and blue. % (which will often referred by $R$ and $B$, respectively). 
For a vertex $x \in V(G)$ and a subset $S \subseteq V(G)$, we write the \emph{red neighbourhood of~$x$ in~$S$} for the set $N^R_G(x, S) \dfn \{ y \in S : xy \text{ is coloured red} \}$, that is, the set of vertices in $S$ connected to $x$ with red edges.
We define $N^B_G(x, S)$ analogously for blue. 
For all $\ast \in \{ R, B \}$, we also define $d^\ast_G(x, S) \dfn |N^\ast_G(x, S)|$ whenever $N^\ast_G(x, S)$ is finite, $d^\ast_G(x, S) \dfn \infty$ otherwise.

For every $i \ge 0$, let $[i] \dfn \{ 1, \dotsc, i \}$ and $[i]_0 \dfn [i] \cup \{ 0 \}$.
For every set $S \subseteq \mathbb{N}$ and $t \in \mathbb{N}$ we write $S \cup t$ for $S \cup \{t\}$.

We write $x \ll y$ to mean that for all $y \in (0, 1]$ there exists $x_0 \in (0,1)$ such that for all $x \leq x_0$ the following statements hold.
Hierarchies with more constants are defined in a similar way and are to be read from right to left.

\section{Monochromatic path-forests} \label{section:pathforest}

Our proof follows the strategies of Erd\H{os} and Galvin~\cite{ErdosGalvin1993} and of DeBiasio and McKenney~\cite{DeBiasioMcKenney2016}, where they reduce the problem of finding monochromatic paths to the problem of finding collections of monochromatic disjoint paths satisfying certain conditions, which are then joined together to form an infinite path.

Consider a $2$-edge-coloured~$K_{\mathbb{N}}$.
We say a vertex $x \in \mathbb{N}$ is \emph{red} (or \emph{blue}) if $x$ has infinitely many red (or blue, respectively) neighbours in $K_{\mathbb{N}}$.
Note that it is possible for a vertex to be both red and blue.
A $2$-edge-colouring of $K_{\mathbb{N}}$ is \emph{restricted} if there is no vertex that is both red and blue.
We write $R$ and $B$ for the set of red and blue vertices of $K_{\NN}$, respectively.

A \emph{path-forest} is a collection of vertex-disjoint paths.
Let $K_{\mathbb{N}}$ be a $2$-edge-coloured graph.
A path-forest~$F$ of~$K_{\mathbb{N}}$ is said to be \emph{red} if every edge of~$F$ is red and all endpoints of every path in $F$ are red.
We further assume that, for every path~$P$ in~$F$, its vertices~$V(P)$ alternate between red and blue.
Note that a red path-forest may contain isolated red vertices. 
A \emph{blue path-forest} is defined similarly.

Our main lemma states that given a restricted $2$-edge-coloured~$K_{\mathbb{N}}$, there exists a monochromatic path-forest~$F$ and an arbitrary long interval~$[t]$ such that $V(F) \cap [t]$ has size which is linear in $t$.

\begin{lemma} \label{lemma:nopurplevertices}
	Let $\eps \in (0, 1/2)$ and $k_0 \in \NN$.
%	Let $\alpha = (7-\sqrt{17})/16$.
	For every restricted $2$-edge-coloured~$K_{\NN}$, there exists an integer $t \ge k_0$ and red and blue path-forests $F^R$ and~$F^B$, respectively, such that 
	\begin{align*}
	\max \{ |V(F^R) \cap [t]|, |V(F^B) \cap [t]| \} \ge ((9 + \sqrt{17})/16 - \eps) t. 
	\end{align*}
\end{lemma}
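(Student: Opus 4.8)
The plan is to argue by cases on the densities of the two colour classes. Since the colouring is restricted and every vertex of $K_{\NN}$ has infinitely many neighbours, every vertex is either red or blue but not both, so $R$ and $B$ partition $\NN$ and $\overline{d}(R)+\overline{d}(B)\ge 1$. If $\overline{d}(R)\ge (9+\sqrt{17})/16$ we are done at once: there are arbitrarily large $t\ge k_0$ with $|R\cap[t]|\ge ((9+\sqrt{17})/16-\eps)t$, and the collection of all red vertices, each regarded as a single-vertex path, is a red path-forest $F^R$ with $V(F^R)=R$ (take $F^B=\emptyset$). The case $\overline{d}(B)\ge (9+\sqrt{17})/16$ is symmetric. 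So from now on both densities lie in $\bigl((7-\sqrt{17})/16,\,(9+\sqrt{17})/16\bigr)$; in particular $R$ and $B$ are infinite.

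The next observation is that in the restricted setting the colour classes are ``locally almost complete'': a red vertex has only finitely many blue neighbours and a blue vertex only finitely many red neighbours, so for every finite set of blue vertices there are infinitely many red vertices joined to all of them by blue edges, and dually. Hence blue vertices can be strung freely into a single alternating blue path using red vertices as connectors — the only constraint being that a red connector can be inserted only between two of its finitely many blue neighbours — and dually for red. This reduces matters to a matching problem: the largest possible $|V(F^R)\cap[t]|$ equals $|R\cap[t]|$ (every red vertex can always be taken, if need be as an isolated vertex) plus the largest number of blue vertices of $[t]$ that can be simultaneously routed through the bipartite graph of red edges between $R$ and $B$, where each routed blue vertex consumes two red vertices and each red vertex serves at most two such; and dually for $F^B$ and the bipartite graph of blue edges. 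Up to an additive $\eps t$ (absorbing the cost of breaking the cycles created by such a routing and of discarding finitely many degenerate components), one gets $|V(F^R)\cap[t]|=t-D^R_t$ and $|V(F^B)\cap[t]|=t-D^B_t$, where $D^R_t$ and $D^B_t$ are the Hall-type deficiencies of $B\cap[t]$ (resp. $R\cap[t]$) for these two-matchings.

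So it suffices to show that for infinitely many $t$ one has $\min(D^R_t,D^B_t)\le ((7-\sqrt{17})/16+\eps)t$. The crucial interaction is that if $S\subseteq B\cap[t]$ witnesses a large $D^R_t$, then every red vertex outside the (necessarily small) red-neighbourhood of $S$ is blue-adjacent to all of $S$; symmetrically for a witness $T\subseteq R\cap[t]$ of a large $D^B_t$. Combining these, one cannot simultaneously have a red vertex in $T$ outside the red-neighbourhood of $S$ and a blue vertex in $S$ outside the blue-neighbourhood of $T$, since the edge between them could be neither red nor blue. Analysing the two resulting cases while tracking the sizes of $S$, $T$, their neighbourhoods, and the class densities already yields a bound of the form $3/4$ (this is essentially the argument of DeBiasio and McKenney). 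To reach $(9+\sqrt{17})/16$ one must not simply discard the ``deficient'' blue vertices of $S$ when building $F^R$, but feed them back into $F^B$: they are blue-complete to cofinitely many red vertices, so they absorb many red vertices of $[t]$ as connectors, boosting $|V(F^B)\cap[t]|$; and dually for $T$. Balancing these two boosts against the two cases is a two-parameter extremal problem whose optimum is the larger root of $8x^2-9x+2=0$, i.e. $(9+\sqrt{17})/16$. Realising this balance robustly — with all $\eps$-errors controlled and both path-forests kept genuinely alternating with red (resp. blue) endpoints at every stage — is cleanest as an algorithm that processes the vertices of $\NN$ in order, grows both forests incrementally, maintains reservoirs of spare connector-endpoints, and re-assigns a vertex from one forest to the other as soon as that is profitable.

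The main obstacle is exactly this bookkeeping. Because each red-class vertex can connect at most two blue-class vertices (and vice versa), each colour's ``connector budget'' is itself linear in $t$, and the two budgets compete for the same vertices of $[t]$; one has to quantify, step by step, how a failure to extend one forest forces growth of the other, and to show that the unavoidable losses — cycle-breaking, degenerate single-vertex components, and vertices temporarily stuck with too few useful neighbours — sum to at most $\eps t$. Making this trade-off tight is what separates the constant $(9+\sqrt{17})/16$ from the weaker $3/4$, and is the technical core of the argument in Section~\ref{section:algorithm}.
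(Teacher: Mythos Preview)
What you have written is a strategic outline, not a proof. The heart of the lemma is precisely the ``bookkeeping'' you relegate to your final paragraph and then explicitly defer to Section~\ref{section:algorithm}; without carrying it out, nothing has been established. Several intermediate claims are also unjustified as stated. The assertion that ``up to an additive $\eps t$ one gets $|V(F^R)\cap[t]|=t-D^R_t$'' for a Hall-type deficiency $D^R_t$ is not obvious and in fact not how the paper proceeds: connectors for blue vertices need not lie in $[t]$ (the algorithm crucially uses \emph{forward} edges to vertices larger than $t$), so the problem does not reduce cleanly to a bipartite matching inside $[t]$. Your interaction argument (``the edge between them could be neither red nor blue'') is too coarse to go beyond $3/4$; and the claim that the resulting ``two-parameter extremal problem'' optimises at the root of $8x^2-9x+2$ is asserted, not derived. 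In the paper this constant emerges only after a lengthy analysis involving four vertex types $W,X,Y,Z$, a difference-inequality lemma (Lemma~\ref{lemma:differenceinequality}) ruling out certain growth rates, and a final four-variable linear-programming certificate $y^tAx>0$.

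Your high-level picture --- process vertices in order, grow both forests, maintain reservoirs of connector endpoints, and trade failures in one forest for progress in the other --- is indeed the paper's strategy, so as motivation the sketch is sound. But the substance of Lemma~\ref{lemma:nopurplevertices} lies entirely in proving that this trade-off is tight enough to yield $(9+\sqrt{17})/16$ rather than $3/4$, and that requires the explicit Algorithm~\ref{algo}, its correctness (Lemma~\ref{lemma:algovalid}), the structural control on $\rho^\ast_t(A^\ast_t)$ (Lemmas~\ref{lemma:usefulfacts}--\ref{lemma:nomoreavailable}), and the concluding contradiction argument. None of this is present in your proposal; you have described the shape of the proof but not supplied it.
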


We defer the proof of Lemma~\ref{lemma:nopurplevertices} to Section~\ref{section:algorithm}.
Note that we can always add any vertex which is both red and blue to a monochromatic path-forest, as an isolated vertex.
Thus Lemma~\ref{lemma:nopurplevertices} implies the following corollary, which is valid for arbitrary $2$-edge-colourings.

\begin{corollary} \label{corollary:densempf}
Let $\eps \in (0, 1/2)$ and $k_0 \in \NN$.
For every $2$-edge-coloured~$K_{\NN}$, there exists an integer $t \ge k_0$ and red and blue path-forests $F^R$ and~$F^B$, respectively, such that 
	\begin{align*}
	\max \{ |V(F^R) \cap [t]|, |V(F^B) \cap [t]| \} \ge ((9 + \sqrt{17})/16 - \eps) t. 
	\end{align*}
\end{corollary}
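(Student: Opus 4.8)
The plan is to reduce to the restricted case handled by Lemma~\ref{lemma:nopurplevertices}, by deleting the set $Q$ of vertices that are simultaneously red and blue (the ``purple'' vertices) and then compensating for the resulting loss of density by reinserting those purple vertices as isolated vertices of the path-forests we obtain.

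First I would dispose of the degenerate case where $\NN \setminus Q$ is finite: then $Q$ is cofinite, so $|Q \cap [t]|/t \to 1$, and since every purple vertex is both red and blue, the family of all purple vertices taken as isolated vertices is simultaneously a red path-forest $F^R$ and a blue path-forest $F^B$; hence every sufficiently large $t \ge k_0$ satisfies $\max\{|V(F^R)\cap[t]|, |V(F^B)\cap[t]|\} = |Q\cap[t]| \ge ((9+\sqrt{17})/16 - \eps)t$. So assume $\NN \setminus Q$ is infinite. The key elementary observation is that the $2$-edge-colouring induced on $\NN \setminus Q$ is restricted: if $x \notin Q$ then $x$ has finitely many red neighbours or finitely many blue neighbours in $K_\NN$, and deleting vertices can only shrink these neighbourhoods, so $x$ is not purple in the induced colouring either. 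Let $\phi \colon \NN \to \NN \setminus Q$ be the order isomorphism, transport the induced colouring to $K_\NN$ through $\phi$, and apply Lemma~\ref{lemma:nopurplevertices} to it with the same $\eps$ and $k_0$; this yields an integer $t' \ge k_0$ and red and blue path-forests $G^R, G^B$ with $\max\{|V(G^R) \cap [t']|, |V(G^B) \cap [t']|\} \ge ((9+\sqrt{17})/16 - \eps)t'$.

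Next I would pull these back to $K_\NN$. The images $\phi(G^R)$ and $\phi(G^B)$ are path-forests, and they remain a red and a blue path-forest respectively, because $\phi$ preserves edge colours and a vertex that is red (respectively blue) in the induced colouring on $\NN \setminus Q$ already has infinitely many red (respectively blue) neighbours inside $\NN \setminus Q$, hence also in $K_\NN$; in particular the endpoint and alternation conditions are preserved. Set $t \dfn \phi(t')$. Since $\phi([t']) = [t] \setminus Q$ and $V(\phi(G^R)), V(\phi(G^B)) \subseteq \NN \setminus Q$, we get $|V(\phi(G^R)) \cap [t]| = |V(G^R) \cap [t']|$ and $|V(\phi(G^B)) \cap [t]| = |V(G^B) \cap [t']|$, while $|Q \cap [t]| = t - t'$. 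Now let $F^R$ be obtained from $\phi(G^R)$ by adding every vertex of $Q \cap [t]$ as an isolated vertex — legitimate since each purple vertex is red and $Q \cap [t]$ is disjoint from $V(\phi(G^R))$ — and define $F^B$ analogously. Then $|V(F^R) \cap [t]| = |V(G^R)\cap[t']| + (t-t')$ and $|V(F^B) \cap [t]| = |V(G^B)\cap[t']| + (t-t')$, so
\[
\max\{|V(F^R)\cap[t]|,\ |V(F^B)\cap[t]|\} \ \ge\ ((9+\sqrt{17})/16 - \eps)\,t' + (t-t').
\]
Because $(9+\sqrt{17})/16 - \eps < 1$ and $t' \le t$, the right-hand side is at least $((9+\sqrt{17})/16 - \eps)\,t$, and $t = \phi(t') \ge t' \ge k_0$, which is exactly what is required.

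The only genuine obstacle is that $Q$ may carry positive density, so that passing from $K_\NN$ to $\NN \setminus Q$ really does throw away density; reinserting the purple vertices as isolated vertices — valid precisely because they are both red and blue — exactly recovers that loss, and the closing inequality goes through only because the target density $(9+\sqrt{17})/16$ is strictly less than $1$.
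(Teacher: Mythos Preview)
Your argument is correct and is precisely the approach the paper has in mind: the paper's justification is the single sentence ``Note that we can always add any vertex which is both red and blue to a monochromatic path-forest, as an isolated vertex,'' and your proof is a careful unpacking of exactly that idea --- delete the purple vertices $Q$, apply Lemma~\ref{lemma:nopurplevertices} to the (now restricted) induced colouring on $\NN\setminus Q$ via an order isomorphism, and reinsert the purple vertices as isolated vertices. The case split on whether $\NN\setminus Q$ is finite, the verification that vertex colours are preserved under the passage to and from $\NN\setminus Q$, and the density bookkeeping are all handled correctly.
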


%\begin{proof}
%	Let $W$ be the set of vertices which are simultaneously red and blue under the canonical vertex-colouring of $K_{\mathbb{N}}$.
%		Suppose first that $\mathbb{N} \setminus W$ is finite.
%		Then for $t \ge k$ large enough, $|W \cap [t]| \ge ( 1 - \eps )t$.
%		The vertices of $W \cap [t]$ form a monochromatic red path-forest $F$ with $|V(F) \cap t| \ge (1 - \eps) t \ge ( (9 + \sqrt{17})/16 - \eps )t$, as desired.
%		
%		Hence, we might suppose that $\mathbb{N} \setminus W$ is infinite.
%		Suppose $\mathbb{N} \setminus W = \{ v_1, v_2, \dotsc \}$ where $v_i < v_j$ for all $i < j$.
%		Consider the induced subgraph of $K_{\mathbb{N}}$ on $\mathbb{N} \setminus W$, together with the inherited edge-colouring.
%		Note that the canonical vertex-colouring in $\mathbb{N} \setminus W$ corresponds exactly to the restriction of the original canonical vertex-colouring to $\mathbb{N} \setminus W$.
%		In particular, the canonical vertex-colouring in $\mathbb{N} \setminus W$ is restricted.
%		Then Lemma~\ref{lemma:nopurplevertices} implies the existence of a (say) red path-forest $F$ in $K_{\mathbb{N}}$ with $V(F) \cap W = \varnothing $ and $t \ge k$ such that $|V(F) \cap \{ v_1, \dotsc, v_t \}| \ge ((9 + \sqrt{17})/16 - \eps) t$.
%		Then $F \cup (W \cap [v_t] )$ is a red path-forest in $K_{\mathbb{N}}$ with \begin{align*}
%			|V(F) \cap [v_t]| & = |W \cap [v_t]| + |V(F) \cap [v_t]| \ge v_t - t + ((9 + \sqrt{17})/16 - \eps)t \\
%			& \ge ((9 + \sqrt{17})/16 - \eps)v_t,
%		\end{align*} as desired.
%\end{proof}

We use it now to deduce Theorem~\ref{theorem:main}.
The proof is based on the proofs of \cite[Theorem 3.5]{ErdosGalvin1993} and \cite[Theorem 1.6]{DeBiasioMcKenney2016}.

\begin{proof}[Proof of Theorem~\ref{theorem:main}]
Consider an arbitrary $2$-edge-colouring of $K_{\mathbb{N}}$.
Suppose that there exist two red vertices $x_1, x_2 \in \mathbb{N}$ and a finite subset~$S$ of~$\mathbb{N}$ such that $K_{\mathbb{N}} \setminus S$ does not contain a red path between $x_1$ and~$x_2$.
For $i \in [2]$, let $X_i$ be the set of vertices reachable from $x_i$ using red paths in~$\mathbb{N} \setminus S$.
Let $X_3 = \mathbb{N} \setminus (X_1 \cup X_2 \cup S)$.
Then $X_1$ and $X_2$ are infinite; $X_1, X_2$ and $X_3$ are pairwise disjoint and there are no red edges between any $X_i, X_j$ for distinct $i,j \in [3]$.
Thus there is an infinite blue path~$P$ on the vertex set $X_1 \cup X_2 \cup X_3 = \mathbb{N} \setminus S$.
Since~$S$ is finite, $\overline{d}(P) = 1$, so we are done.
An analogous argument is true if red is swapped with blue.
Hence, we might assume that
\begin{align}
	\parbox{0.85\textwidth}{for any two red (or blue) vertices $x_1,x_2$ and any finite set~$S \subseteq \mathbb{N} \setminus \{ x_1,x_2\}$, there is a red (or blue, respectively) path joining $x_1$ and $x_2$ in $K_{\mathbb{N}} \setminus S$.} \label{eq:joinability}
\end{align}
	
For all $i \in\mathbb{N}$, let $\eps_i \dfn 1 / (2i) $.
If the vertex $1$ is red, set $P^R_1 = (\{1\}, \varnothing)$ to be the red path with the vertex~$1$ and $P^B_1$ to be empty.
Otherwise, set $P^R_1$ to be empty and $P^B_1 = (\{1\}, \varnothing)$.
Set $n_1 = 1$.
Suppose that, for some $i \in \mathbb{N}$, we have already found an integer $n_{i}$ and red and blue paths~$P_{i}^R$ and $P_{i}^B$, respectively, such that the endpoints of $P_i^R$ are red, the endpoints of $P_i^B$ are blue; and
\begin{align}
	\label{eqn:P^R_i}
	\max\{ |V(P^R_i) \cap [n_i]|, |V(P^B_i) \cap [n_i]| \} \ge ((9 + \sqrt{17})/16 - 2 \eps_i)n_i.
\end{align}
We construct $n_{i+1}$, $P_{i+1}^R$ and $P_{i+1}^B$ as follows.
Let $r_i \dfn \max \{ V(P^R_i) , V(P^B_i), n_i\}$ and $k_i \dfn r_i/ \eps_{i+1} = 2(i+1) r_i$.
Considering the induced subgraph of $K_{\mathbb{N}}$ on $\mathbb{N} \setminus [r_i]$, by Corollary~\ref{corollary:densempf}, there exists a monochromatic path-forest $F_{i+1}$ and $t_i \ge k_i$ such that $ | V ( F_{i+1} ) \cap \{ r_i+1, \dotsc, r_i+t_i \} | \ge ( (9 + \sqrt{17})/16 - \eps_{i+1}  ) t_i$.
Let $n_{i+1} \dfn r_i+t_i$.
By the choice of $k_i$, note that
\begin{align*}
	|V(F_{i+1}) \cap [n_{i+1}] |
	& \ge ( (9 + \sqrt{17})/16 - \eps_{i+1}  ) t_i 
	 \ge ( (9 + \sqrt{17})/16 - 2 \eps_{i+1}  ) n_{i+1}.
\end{align*}	
Suppose $F_{i+1}$ is red (if not, interchange the colours in what follows).
Let $P^B_{i+1} \dfn P^B_i$.
Apply~\eqref{eq:joinability} repeatedly to join the endpoints of the paths in~$P^R_i \cup F_i$ and obtain a red path $P^R_{i+1}$ containing $P^R_{i}$ and~$F_i$ with red vertices as endpoints.

By construction, we have $n_{i+1} >n_i$ and \eqref{eqn:P^R_i} holds for all $i \ge 1$.
Without loss of generality, we may assume that $ |V(P^R_i) \cap [n_i]| \ge ((9 + \sqrt{17})/16 - 2 \eps_i)n_i$ for infinitely many values of~$i$. 
Let $P \dfn \bigcup_{i \ge 1} P^R_i$.
Therefore, $P$ is a monochromatic path and $\overline{d}(P) \ge (9 + \sqrt{17})/16 $.
\end{proof}

\section{Preliminaries} \label{section:tools}

In this section, we consider two ways of extending a path forest. 

\begin{proposition} \label{proposition:extendpathforest}
	Let $G$ be a graph.
	Let $F \subseteq G$ be a path-forest and let $J \subseteq V(F)$ be the set of vertices with degree at most one in $F$.
	Let $x \in V(G) \setminus V(F)$ be such that $d_G(x, J) \ge 3$.
	Then there exist~$j_1, j_2 \in V(F)$ such that $F \cup \{ xj_1, x j_2 \}$ is a path-forest.
\end{proposition}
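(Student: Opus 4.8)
The plan is to prove this by a short pigeonhole argument on the components of~$F$. First I would record the structure of~$J$: since $F$ is a path-forest, a vertex has degree at most one in~$F$ precisely when it is an endpoint of a path of~$F$ or an isolated vertex of~$F$. Consequently, each component (path) of~$F$ contains \emph{at most two} vertices of~$J$ --- exactly two if the path has an edge, and exactly one if the path is a single vertex.

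Because $d_G(x,J) \ge 3$, fix three distinct vertices $y_1, y_2, y_3 \in N_G(x,J)$. By the observation above, three vertices of~$J$ cannot all lie in a single component of~$F$, so two of them, say $j_1 \dfn y_a$ and $j_2 \dfn y_b$ with $a \ne b$, lie in distinct components $P_1$ and~$P_2$ of~$F$; in particular $j_1 \ne j_2$. Set $F' \dfn F \cup \{ xj_1, xj_2 \}$.

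Next I would verify that $F'$ is a path-forest. Since $x \notin V(F)$, the vertex $x$ has degree exactly~$2$ in~$F'$; each of $j_1$ and~$j_2$ had degree at most one in~$F$ and hence degree at most two in~$F'$; all other vertices retain their $F$-degree. Thus $F'$ has maximum degree at most two, so each of its components is a path or a cycle. The components of~$F'$ are exactly the components of~$F$ other than $P_1$ and~$P_2$, together with one new component~$C$ containing $x$, $P_1$ and~$P_2$. Since $j_1$ is an endpoint of~$P_1$ (or $P_1$ is the single vertex~$j_1$) and likewise $j_2$ is an endpoint of~$P_2$, and since $P_1 \ne P_2$, the component $C$ is obtained by concatenating $P_1$ and~$P_2$ through the new internal vertex~$x$; this is a path and, crucially, not a cycle. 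Hence every component of~$F'$ is a path, so $F'$ is a path-forest, proving the proposition.

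There is no real obstacle here; the only points requiring a little care are the degenerate cases in the bookkeeping (isolated vertices of~$F$ behaving as trivial paths, and single-edge paths whose two endpoints both lie in~$J$), and the observation that one must \emph{not} choose $j_1$ and~$j_2$ to be the two endpoints of the same path of~$F$, as that would close a cycle instead of forming a path. The hypothesis $d_G(x,J)\ge 3$ is precisely what guarantees a valid choice exists.
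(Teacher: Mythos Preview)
Your argument is correct and is exactly the paper's approach: the paper's proof is the single sentence ``Since $d_G(x, J) \ge 3$, there exist at least two neighbours of~$x$ in~$J$, which are not endpoints of the same path in~$F$,'' which is precisely your pigeonhole observation that three vertices of~$J$ cannot all lie in one component of~$F$. You have simply spelled out the verification that $F \cup \{xj_1, xj_2\}$ is a path-forest in more detail than the paper does.
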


\begin{proof}
	Since $d_G(x, J) \ge 3$, there exist at least two neighbours of~$x$ in~$J$, which are not endpoints of the same path in~$F$.
\end{proof}

\begin{proposition} \label{proposition:usingavailabledegree}
	Let $G$ be a graph and $F \subseteq G$ a path-forest.
	Let $Y \subseteq V(G) \setminus V(F)$ and $X \subseteq V(F)$. 
	Suppose that \begin{enumerate}
		\item \label{item:x0x1size} $\sum_{x \in X} (2- d_F(x)) \ge 2 |Y|$, and
		\item \label{item:x0x1degree} for every $x \in X$, $d_G(x, Y) \ge |Y| - 2$.
	\end{enumerate} 
	Then there exists a path-forest $F' \subseteq G[X, Y]$; every path in~$F'$ has both endpoints in $X$; $F \cup F'$ is a path-forest and $|V(F') \cap Y| \ge |Y| - 4$.
\end{proposition}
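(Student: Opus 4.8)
The plan is to build $F'$ greedily, adding vertices of $Y$ to $F$ one at a time while maintaining that the current graph is a path-forest all of whose newly created paths have both endpoints in $X$. Think of the vertices of $X$ as "slots": each $x \in X$ starts with $2 - d_F(x)$ units of available degree, and hypothesis~\ref{item:x0x1size} says the total available degree is at least $2|Y|$. To attach a vertex $y \in Y$ we want to use two units of available degree at two distinct vertices $x_1, x_2 \in X$ that are not the two endpoints of a common path in the current path-forest, and such that $yx_1, yx_2$ are edges of $G$; this is exactly the situation of Proposition~\ref{proposition:extendpathforest} applied with the current path-forest in the role of $F$, with $J$ the set of current degree-$\le 1$ vertices, and noting that vertices of $X$ with remaining available degree are precisely the ones sitting in $J$ (more carefully: a vertex $x\in X$ with $d_F(x)=0$ can absorb two attachments, one with $d_F(x)=1$ can absorb one).

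Here are the steps in order. First I would set $F_0 \dfn F$ and process the vertices $y_1, y_2, \dots$ of $Y$ in turn. Second, at step $i$, having built $F_{i-1}$, I look at the set $X_{i-1} \subseteq X$ of vertices that still have positive available degree in $F_{i-1}$, and at $J_{i-1}$, the degree-$\le 1$ vertices of $F_{i-1}$; note $X_{i-1} \cap V(F_{i-1}) \subseteq J_{i-1}$ by construction. Third, I want to check that $d_G(y_i, J_{i-1} \cap X_{i-1}) \ge 3$ holds for all but at most a bounded number of the $y_i$; when it holds, Proposition~\ref{proposition:extendpathforest} gives $j_1, j_2$ (which will lie in $X$) so that $F_{i-1} \cup \{y_i j_1, y_i j_2\}$ is a path-forest, and I take this to be $F_i$ (and if it fails, set $F_i = F_{i-1}$, skipping $y_i$). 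Fourth, I would verify that throughout the process every path created by adding the $y_i$'s has both endpoints in $X$ — this is immediate since each added edge goes from a $Y$-vertex to an $X$-vertex and $Y$ is independent from the perspective of what we add, so the $Y$-vertices always have degree $\le 2$ and can never be endpoints, forcing endpoints into $X$. Finally, counting: each successfully processed $y_i$ consumes exactly two units of available degree, and by~\ref{item:x0x1size} there are at least $2|Y|$ units to begin with, so a counting argument shows we do not run out "too early", and combined with the degree bound we absorb all but at most $4$ vertices of $Y$.

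The main obstacle — and the only place the constants $-2$ and $-4$ come from — is bounding how often the greedy step can fail, i.e.\ how often $d_G(y_i, J_{i-1} \cap X_{i-1}) < 3$. By~\ref{item:x0x1degree} we know $d_G(y_i, Y) \ge |Y| - 2$; wait, the hypothesis is $d_G(x,Y)\ge |Y|-2$ for $x \in X$, so rephrased, every $x\in X$ misses at most $2$ vertices of $Y$. I would turn this around: the number of non-edges between $X$ and $Y$ is at most $2|X|$, but that is the wrong direction, so instead I track it dynamically. The key estimate is that when we process $y_i$, the vertices of $X$ unavailable to it are: (a) those with no remaining available degree, and (b) those $x$ with $y_i x \notin E(G)$, of which there are at most $2$ by~\ref{item:x0x1degree} applied from the $X$-side — more precisely each $x\in X$ has at most $2$ non-neighbours in $Y$, and one shows that at most a bounded total number of $(x,y)$ pairs are "blocked", so at most $4$ vertices $y$ can ever be left unabsorbed. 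Careful bookkeeping of available degree versus the size of $X_{i-1}$, using that each step either reduces available degree by $2$ or is one of the $\le 4$ failures, closes the argument and yields $|V(F') \cap Y| \ge |Y| - 4$. I would present this bookkeeping as a short induction on $i$ maintaining the invariant that the available degree of $F_{i}$ restricted to $X$ equals (initial available degree) $- 2\cdot(\text{number of }y\text{'s absorbed so far})$, together with the observation that $|Y| - 2 \ge 3$ fails only when $|Y| \le 4$, which is a trivial boundary case handled separately.
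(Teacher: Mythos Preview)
Your greedy processes $Y$-vertices one at a time and looks for adjacent $X$-vertices, but hypothesis~\ref{item:x0x1degree} controls degrees in the \emph{other} direction: it says each $x\in X$ has at most two non-neighbours in $Y$, not that each $y\in Y$ has at most two non-neighbours in $X$. Your sentence ``those $x$ with $y_i x\notin E(G)$, of which there are at most~$2$'' misreads the hypothesis. A single $y$ may have no $X$-neighbours at all: take $|X|=|Y|=n$ with $d_F\equiv 0$ and make every $x$ adjacent to every $y$ except $y_1,y_2$; then \ref{item:x0x1size} and \ref{item:x0x1degree} hold, yet $y_1,y_2$ each have zero neighbours in $X$. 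So the bookkeeping claim that at most four $y$'s are ever skipped is unsupported, and the closing remark that the issue is only ``$|Y|-2\ge 3$'' betrays the same confusion about which side the degree bound lives on.

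The paper avoids this by running the greedy in the opposite direction. After assuming without loss of generality that $d_F(x)<2$ for every $x\in X$ (so $|X|\ge |Y|\ge 5$ by~\ref{item:x0x1size}), it first picks $x_1,x_2\in X$ that are not the two endpoints of a common path of $F$, and \emph{then} uses \ref{item:x0x1degree} as stated to find a common neighbour $y\in N_G(x_1)\cap N_G(x_2)\cap Y$, which exists because each $x_i$ misses at most two vertices of $Y$ and $|Y|\ge 5$. Setting $F_1:=F\cup\{x_1y,x_2y\}$ and $Y':=Y\setminus\{y\}$, both hypotheses persist (the sum in \ref{item:x0x1size} drops by $2$ and $|Y'|=|Y|-1$; for \ref{item:x0x1degree}, $d_G(x,Y')\ge |Y|-3=|Y'|-2$), so one finishes by induction on $|Y|$, with the base case $|Y|\le 4$ trivial. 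The missing idea in your attempt is precisely this: choose the $x$'s first, so that \ref{item:x0x1degree} can be applied in the direction it is stated.
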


\begin{proof}
Without loss of generality, we may assume that $d_{F}(x) < 2$ for all $x \in X$.
We proceed by induction on~$|Y|$.
It is trivial if $|Y| \le 4$ (by setting $F'$ to be empty).
So we may assume that $|Y| \ge 5$. 
Note that $|X| \ge 5$ by~\ref{item:x0x1size}.
Pick $x_1, x_2 \in X$ be such that $x_1$ and $x_2$ are not connected in~$F$.
By~\ref{item:x0x1degree} and $|Y| \ge 5$, there exists $y \in Y \cap N_G(x_1) \cap N_G(x_2)$.
Set~$F_1 \dfn F \cup \{ x_1 y, x_2 y \}$ and $Y' \dfn Y \setminus \{ y \}$.
It is easy to check that $F_1,X,Y'$ also satisfy the corresponding \ref{item:x0x1size} and~\ref{item:x0x1degree}.
Therefore, by our induction hypothesis, the proposition holds. 
\end{proof}

The next lemma is a useful statement about difference inequalities.
We include its proof for completeness.

\begin{lemma} \label{lemma:differenceinequality}
	Let $\tau_1, \tau_2 > 0$, $c_0 \ge 0$ be given and let $s_0, s_1, \dotsc$ be a strictly increasing sequence of non-negative integers.
	Suppose there exists $n_0$ such that for every $n \ge n_0$, \[s_{n+1} \leq \tau_1 s_n - \tau_2 s_{n-1} + c_0.\]
	Then $\tau^2_1 \ge 4 \tau_2$.
\end{lemma}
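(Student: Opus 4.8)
The plan is to argue by contradiction: suppose $\tau_1^2 < 4\tau_2$, and derive that the sequence $(s_n)$ cannot be strictly increasing and integer-valued forever, contradicting the hypothesis. The natural tool is the theory of linear recurrences. Consider the associated characteristic equation $x^2 - \tau_1 x + \tau_2 = 0$. When $\tau_1^2 < 4\tau_2$, its roots are a complex-conjugate pair $re^{\pm i\theta}$ with $r = \sqrt{\tau_2} > 0$ and $\theta \in (0,\pi)$ not a multiple of $\pi$ (so that $\sin\theta \neq 0$). The key point is that solutions of the homogeneous recurrence $u_{n+1} = \tau_1 u_n - \tau_2 u_{n-1}$ behave like $r^n(\alpha\cos n\theta + \beta\sin n\theta)$, which oscillates in sign whenever it is not identically zero; a monotone sequence cannot be controlled from above by such a thing together with the bounded correction coming from $c_0$.

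The steps I would carry out: first, handle the inhomogeneous term by shifting. If $\tau_1 - \tau_2 \neq 1$, set $s_n' = s_n - L$ where $L = c_0/(1 - \tau_1 + \tau_2)$ is the fixed point, so that $s_{n+1}' \le \tau_1 s_n' - \tau_2 s_{n-1}'$ for $n \ge n_0$; if $\tau_1 - \tau_2 = 1$ one can absorb the constant differently, but note in that case the characteristic polynomial has $1$ as a root, forcing $\tau_1 = 1 + \tau_2 \ge 2\sqrt{\tau_2}$ by AM–GM, i.e. $\tau_1^2 \ge 4\tau_2$ already — so that degenerate case is consistent with the conclusion and needs no further work. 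Second, for the shifted sequence, define $t_n$ to be the solution of the \emph{equality} recurrence $t_{n+1} = \tau_1 t_n - \tau_2 t_{n-1}$ with $t_{n_0} = s_{n_0}'$, $t_{n_0+1} = s_{n_0+1}'$; an easy induction using $\tau_1, \tau_2 > 0$ and the inequality $s_{n+1}' \le \tau_1 s_n' - \tau_2 s_{n-1}'$ shows $s_n' \le t_n$ for all $n \ge n_0$ — wait, this needs care, since the recurrence has a negative coefficient. Better: work directly. Third, and this is the crux, I would show that under $\tau_1^2 < 4\tau_2$ the sequence $s_n$ must eventually fail to be strictly increasing, by extracting a contradiction from sign oscillation. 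Concretely, iterate the inequality to get an explicit decay/oscillation estimate: one shows that if $s_n$ were increasing then $s_{n+1} - s_n \le (\tau_1 - 1)s_n - \tau_2 s_{n-1} + c_0 \le (\tau_1 - 1 - \tau_2)s_{n-1} + (\tau_1 - 1)(s_n - s_{n-1}) + c_0$, and when $\tau_1 \le 1 + \tau_2$ the coefficient $\tau_1 - 1 - \tau_2 \le 0$, while $\tau_1^2 < 4\tau_2$ combined with $\tau_1 > 1 + \tau_2$ is impossible (again AM–GM: $\tau_1 > 1 + \tau_2 \ge 2\sqrt{\tau_2}$ gives $\tau_1^2 > 4\tau_2$). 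So in fact $\tau_1 \le 1 + \tau_2$, and then the recurrence inequality forces $s_n$ to be bounded, hence eventually constant (being integer-valued and, if monotone, convergent) — contradicting strict monotonicity once $s_{n-1}$ is large enough that $(\tau_1 - 1 - \tau_2)s_{n-1} + c_0 < 0$.

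Let me restate the clean version of that last argument, since it sidesteps complex numbers entirely. Assume for contradiction $\tau_1^2 < 4\tau_2$. By AM–GM, $\tau_1 < 2\sqrt{\tau_2} \le 1 + \tau_2$, so $\delta := 1 + \tau_2 - \tau_1 > 0$. Since $(s_n)$ is a strictly increasing sequence of non-negative integers, $s_n \to \infty$; pick $n \ge n_0$ with $s_{n-1} > c_0/\delta$ and also $s_{n-1} \ge s_{n_0} \ge 0$. Then
\begin{align*}
s_{n+1} - s_n &\le (\tau_1 - 1)s_n - \tau_2 s_{n-1} + c_0 \\
&= (\tau_1 - 1)(s_n - s_{n-1}) + (\tau_1 - 1 - \tau_2)s_{n-1} + c_0 \\
&\le (\tau_1 - 1)(s_n - s_{n-1}) - \delta s_{n-1} + c_0,
\end{align*}
where in the last line I used $\tau_1 - 1 - \tau_2 = -\delta$; but this only helps directly if $\tau_1 \le 1$. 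The honest fix, and what I expect to be the main obstacle, is controlling the term $(\tau_1 - 1)(s_n - s_{n-1})$ when $\tau_1 > 1$: one must feed back in that the \emph{gaps} $s_n - s_{n-1}$ are themselves governed by the recurrence. I would therefore set $g_n := s_n - s_{n-1} \ge 1$ and derive $g_{n+1} \le (\tau_1 - 1)g_n - \tau_2 g_{n-1} + (\text{telescoped constant})$ — no, cleaner still: subtract consecutive instances of the inequality to get $s_{n+1} - s_n \le \tau_1(s_n - s_{n-1}) - \tau_2(s_{n-1} - s_{n-2})$, i.e. $g_{n+1} \le \tau_1 g_n - \tau_2 g_{n-1}$ for $n \ge n_0 + 1$, a \emph{homogeneous} inequality in the positive integers $g_n \ge 1$. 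Now iterate: since $\tau_2 g_{n-1} \ge \tau_2$, we get $g_{n+1} \le \tau_1 g_n - \tau_2$; if additionally $g_n \le g_{n-1}$ at some stage it propagates, and in general summing shows $\sum_n g_n$ converges whenever $\tau_1 < 1 + \tau_2$ — but $g_n \ge 1$ makes the sum diverge, contradiction. The remaining delicate point is making the "$\sum g_n$ converges" step rigorous from $g_{n+1} \le \tau_1 g_n - \tau_2 g_{n-1}$ with $\tau_1^2 < 4\tau_2$; this is exactly where I would invoke that the companion matrix $\left(\begin{smallmatrix}\tau_1 & -\tau_2\\ 1 & 0\end{smallmatrix}\right)$ has spectral radius $\sqrt{\tau_2}$ with complex eigenvalues, so $g_n \le C\sqrt{\tau_2}^{\,n}$ forces $g_n \to 0$ if $\tau_2 < 1$, and if $\tau_2 \ge 1$ one instead directly checks $g_{n+1} + \tau_2 g_{n-1} \le \tau_1 g_n < 2\sqrt{\tau_2}\, g_n$ contradicts $g_n \ge 1$ by AM–GM on the left side ($g_{n+1} + \tau_2 g_{n-1} \ge 2\sqrt{\tau_2}\sqrt{g_{n+1}g_{n-1}} \ge 2\sqrt{\tau_2}$). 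That last AM–GM observation actually closes the whole proof in one line once we have $g_{n+1} \le \tau_1 g_n - \tau_2 g_{n-1}$ and $g_n \ge 1$: rearranged, $2\sqrt{\tau_2} \le 2\sqrt{\tau_2}\sqrt{g_{n+1}g_{n-1}} \le g_{n+1} + \tau_2 g_{n-1} \le \tau_1 g_n$, so $\tau_1 \ge 2\sqrt{\tau_2}/g_n$; but that still needs $g_n$ bounded. So the real content is: show $g_n$ is bounded (equivalently, that the homogeneous inequality with subdominant-free behaviour keeps $g_n$ from growing), then the one-line AM–GM on $g_{n+1} + \tau_2 g_{n-1} \le \tau_1 g_n$ evaluated where $g_{n+1}, g_{n-1} \ge 1$ gives $\tau_1 \ge 2\sqrt{\tau_2}$, i.e. $\tau_1^2 \ge 4\tau_2$, the desired contradiction. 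I expect boundedness of $(g_n)$ to be the step requiring the most care, and I would establish it by a straightforward induction showing $g_{n+1} \le \max\{g_{n_0+1}, g_{n_0+2}\} + \text{const}$ fails unless $\tau_1 < 2$, handling $\tau_1 \ge 2$ separately where $\tau_1^2 \ge 4 \ge 4\tau_2$ would already follow were $\tau_2 \le 1$, and otherwise using the quadratic-formula bound $g_n = O(\sqrt{\tau_2}^n)$ with $\sqrt{\tau_2} > 1$ handled by noting the oscillation of $\cos n\theta$ forces infinitely many $n$ with $g_n < \tfrac12 C\sqrt{\tau_2}^n$ — at which point, for completeness, I would just present the clean complex-root argument: write $g_n$'s dominant term as $r^n(\alpha\cos n\theta + \beta\sin n\theta)$ with $r = \sqrt{\tau_2}$, $\sin\theta \neq 0$; the bracket takes values of both signs infinitely often (indeed is $\le 0$ for infinitely many $n$, since $\{n\theta \bmod 2\pi\}$ is either dense or periodic and in either case hits the interval where the bracket is negative), contradicting $g_n \ge 1 > 0$ for all large $n$. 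That complex-root dichotomy is the one genuinely non-routine ingredient, and it is standard.
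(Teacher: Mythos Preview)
Your proposal has a genuine gap at its core step. You write: ``subtract consecutive instances of the inequality to get $s_{n+1} - s_n \le \tau_1(s_n - s_{n-1}) - \tau_2(s_{n-1} - s_{n-2})$, i.e.\ $g_{n+1} \le \tau_1 g_n - \tau_2 g_{n-1}$.'' But you cannot subtract two inequalities in the same direction. From $s_{n+1} \le \tau_1 s_n - \tau_2 s_{n-1} + c_0$ and $s_n \le \tau_1 s_{n-1} - \tau_2 s_{n-2} + c_0$ nothing follows about $s_{n+1} - s_n$ versus $\tau_1(s_n - s_{n-1}) - \tau_2(s_{n-1} - s_{n-2})$: you would need a \emph{lower} bound on $s_n$ to subtract, and you only have an upper bound. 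Everything downstream --- the AM--GM manoeuvre on $g_{n+1} + \tau_2 g_{n-1}$, the complex-root oscillation for the gaps --- rests on this invalid homogeneous inequality for $(g_n)$ and so is unsupported. You also flagged, correctly, that the comparison ``$s_n' \le t_n$ by induction'' is dubious because of the negative $-\tau_2$ coefficient, and you never resolved that either; this means your complex-root route for $s_n'$ itself (not just for $g_n$) is also incomplete.

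The paper's argument avoids both difficulties with two short ideas you did not hit on. First, it absorbs $c_0$ multiplicatively rather than additively: since $s_n \to \infty$, eventually $c_0 \le \delta s_{n+1}$, so $s_{n+1} \le \rho_1 s_n - \rho_2 s_{n-1}$ with $\rho_i = \tau_i/(1-\delta)$, and $\delta$ is chosen small enough that $\rho_1^2 < 4\rho_2$ still holds. Second, instead of gaps it studies the ratios $\beta_n := s_{n+1}/s_n \in (1,\rho_1)$: the homogeneous inequality rearranges to $\beta_{n-1} \ge \rho_2/(\rho_1 - \beta_n) =: f(\beta_n)$, and $\rho_1^2 < 4\rho_2$ is exactly the condition that $f(x) > x$ on $(-\infty,\rho_1)$. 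Hence $(\beta_n)$ is strictly decreasing and bounded, so converges to some $\beta$ with $\beta \ge f(\beta) > \beta$, a contradiction. This ratio trick sidesteps both the sign issue in comparison arguments and the illegitimate subtraction, and is considerably shorter than any of the routes you sketched.
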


\begin{proof}
	Suppose $\tau^2_1 < 4 \tau_2$.
	Choose $\delta \in (0,1)$ sufficiently small such that $\tau_1^2 < 4 \tau_2 (1 - \delta)$ and let $\rho_1 \dfn \tau_1/(1 - \delta)$ and $\rho_2 \dfn \tau_2/(1 - \delta)$.
	Since $\{ s_n \}_{n \in \mathbb{N}}$ is a strictly increasing sequence of non-negative integers, there exists $n_1 \ge n_0$ such that \begin{align*}
	\delta s_n \ge c_0\text{ for every $n \ge n_1$.}
	\end{align*}
	Then, for $n \ge n_1$, $s_{n+1} \leq \tau_1 s_n - \tau_2 s_{n-1} + \delta s_{n+1}$, which implies, for every $n \ge n_1$,
	\begin{align}
	s_{n+1} \leq \rho_1 s_n - \rho_2 s_{n-1}. \label{eq:constrainedgrowth}
	\end{align}
	
	Consider the function $f : (- \infty, \rho_1) \rightarrow \mathbb{R}$ given by $f(x) = \rho_2/(\rho_1-x)$.
	It is immediate that $f$ is continuous.
	Since $\rho^2_1 < 4 \rho_2$, it follows that $x < f(x)$ for all $x < \rho_1$.
	
	For every $n \ge n_1$, let $\beta_n \dfn s_{n+1}/s_n$.
	From \eqref{eq:constrainedgrowth}, for every $n \ge n_1$, \[ 1 < \beta_n < \rho_1. \] Using \eqref{eq:constrainedgrowth} it also follows that $ \rho_1 s_n - \rho_2 s_{n-1} \ge \beta_n s_n, $ which can be rearranged to get \[ \beta_{n-1} = \frac{s_n}{s_{n-1}} \ge \frac{\rho_2}{\rho_1 - \beta_n} = f( \beta_n ) > \beta_n. \]
	Since $\beta_n$ is monotone decreasing and bounded, it converges to a limit $\beta \in [1, \rho_1)$.
	Moreover, the sequence $f(\beta_n)$ converges to the same limit.
	The continuity of $f$ implies that $\beta = f(\beta) > \beta$, a contradiction.
\end{proof}

\section{Proof of Lemma~\ref{lemma:nopurplevertices}} \label{section:algorithm}

\subsection{The path-forests algorithm}

To satisfy the conditions stated in Lemma~\ref{lemma:nopurplevertices}, we consider an algorithm that will build path-forests considering one extra vertex at a time, in increasing order. 

Our algorithm is based on the following simple idea. 
Suppose that $t \in \mathbb{N}$ is a red vertex and we have constructed red and blue path-forests $F^R$ and $F^B$, respectively. 
We can add~$t$ to~$F^R$ without any difficulty, forming a new red path-forest.
We would like to add~$t$ to the blue path-forest~$F^B$ as well.
However, we will add $t$ to the blue path-forest~$F^B$ using only forward edges or only backward edges.
Namely, when we say ``add~$t$ to~$F^B$ using forward edges'' (or backward edges) we mean to add the blue edges $t j_1, t j_2$ to~$F^B$ for some blue vertices $j_1,j_2 >t$ (or $j_1,j_2 < t$, respectively).
We remark that the red (or blue) path-forest will contain all the red (or blue) vertices that have been considered so far, but it might be possible that some vertices are never included in the path-forest of the opposite colour.
 
Here we give an outline of Algorithm~\ref{algo}.
There is a positive even integer~$\ell$ which will be chosen before running the algorithm.
The algorithm will consider each $t \in \mathbb{N}$ in order to decide whether to add it to the path-forest of the opposite colour by using forward or backward edges, with a preference toward forward edges. 
In fact, the algorithm will add a vertex using forward edges straight away, if possible, but will only add vertices using backward edges in batches.
Roughly speaking, $A^R_t$ will be an (ordered) set of red vertices $v \in [t]$ such that $v$ is joined to almost all blue vertices $w > v$ with red edges. 
Once $A^R_t$ is large enough, we will set aside a subset~$\Omega^R$ of~$A^R_t$ ``of size $\ell$'', which will be the `smaller' endpoints of the backward edges. 
We continue the algorithm and collect a set $\Gamma^B$ of blue vertices, which could not be included in the red path-forest by using red forward edges. 
Once $\Gamma^B$ has $\ell$ vertices, we then add most of the vertices of $\Gamma^B$ into the red path-forest using red backward edges between $\Omega^R$ and~$\Gamma^B$.

During the course of the algorithm, we will also construct a function $\varphi : \mathbb{N} \rightarrow \mathbb{N}$, which will help us to define the sets $A^R_t, A^B_t$ at any given step.
The role of $\varphi$ is the following: a red vertex $t$ will be part of $A^R_{t'}$ only when $t' \ge \varphi(t)$, similarly with the blue vertices.
Imprecisely speaking, for a red vertex we would like $\varphi(t)$ to be ``the last'' of the blue vertices connected to $t$ via forward blue edges (this makes sense since the colouring is restricted); if no such blue vertices exist we just define $\phi(t) = t$.
If $t' = \varphi(t)$ is chosen like this, then when the algorithm reaches step $t'$, the red vertex $t$ will now be connected to ``most'' of the upcoming blue vertices using only red edges, which makes $t$ suitable to belong in $A^R_{t'}$.

Before presenting the algorithm, we will need the following notation. 
Suppose that after round number $t$, we have constructed red and blue path-forests $F^R_t$ and $F^B_t$, respectively.
Given an ordered vertex set~$V = \{v_i \colon i \in [n]\}$ and $\ast \in \{R,B\}$, define \[ \tildadeg^\ast_t(V) \dfn \sum_{v \in V} ( 2 - d_{F^\ast_t}(v)). \]
We view $\tildadeg^\ast_t(V)$ to be the number of additional degree that we can (theoretically) add to $V$ while keeping $F^\ast_t$ being a path-forest.
Suppose an even $\ell \in \mathbb{N}$ is given and $V = \{ v_i : i \in [n] \}$.
If $\tildadeg^\ast_t(V) \ge \ell$, then we define $\sigma^\ast_t(V)$ in the following way:
let $s \in [n]$ be minimal such that $\tildadeg^\ast_t(\{v_i \colon i \in [s]\}) \ge \ell$ and then select $V' \subseteq \{v_i \colon i \in [s]\} \subseteq V$ to be minimal with respect to inclusion such that $\tildadeg^\ast_t(V') \ge \ell$; and let $\sigma^\ast_t(V) := V'$.
Note that, by choice, $d_{F^\ast_t}(v) \leq 1$ for all $v \in V'$.
%Let $\sigma^{F, \ell}(V) = \{v \in \hat{\sigma}^{F, \ell}(V) \colon d_F(v) \leq 1\}$, that is, the subset of $\hat{\sigma}^{F, \ell}(V)$ that has positive contribution to $\tildadeg_F(\hat{\sigma}^{F, \ell}(V)  )$.
Note as well that $\tildadeg^\ast_t(\sigma^{\ast}_t(V)) \in \{ \ell, \ell+1 \}$.
(Referring to the outline above, we will set $\Omega^R = \sigma^\ast_t(A^R_t)$.)
%For~$\ast \in \{R,B\}$, we write $c_t^* \dfn |V(F_t^\ast) \cap [t]|$.% $\tildadeg_t^*(V) \dfn \tildadeg_{F_t^*}(V)$ and $\sigma_t^*(V) \dfn \sigma^{F_t^*, \ell}(V)$.

We make the following crucial definition.
For all $\ast \in \{R,B\}$ and $t \in \mathbb{N}$, we define
\begin{align*}
c^\ast_t & := |V(F^\ast_t) \cap [t]|.
\end{align*}

We are now ready to describe the algorithm. 
We will verify that this algorithm is well-defined in Lemma~\ref{lemma:algovalid}.

\begin{algonice} \label{algo}
Fix an even $\ell \in \mathbb{N}$.
Given any restricted $2$-edge-colouring of $K_{\NN}$, we now construct monochromatic path-forests as follows.
Initially, let $F^\ast_{0}, A^{\ast}_0, \Omega^\ast_0, \Gamma^\ast_0, \varphi_0$ be empty for all $\ast \in \{ R, B \}$.
Now suppose that we are at round number $t \ge 1$, and we have already constructed monochromatic path-forests~$F^{\ast}_{t-1}$, an ordered vertex subset $A^{\ast}_{t-1}$, vertex subsets~$\Omega^{\ast}_{t-1}, \Gamma^{\ast}_{t-1}$ for $\ast \in \{R,B\}$ and a function~$\varphi_{t-1} : [t-1] \rightarrow \NN$.

We now construct $F^{\ast}_{t}, A^{\ast}_{t}, \Omega^{\ast}_{t}, \Gamma^{\ast}_{t}, \varphi_t$ as follows by considering the vertex~$t \in \mathbb{N}$.
Suppose~$t \in R$ (and if $t \in B$, interchange the roles of $R$ and~$B$ in what follows).
Our algorithm works in four steps.

\begin{enumerate}[label=\textbf{Step \arabic*:}, ref=\arabic*, wide, labelwidth=!, labelindent=0pt]
	\item \textbf{Adding $t$ to the red path-forest}. \\
	Set $F^R_{t} \dfn F^R_{t-1} \cup  t $.

	\item \textbf{Updating available and waiting blue vertices}.\\
	Let $A^B_t$ be obtained from $A^{B}_{t-1}$ by adding the vertices $v \in [t-1]$ with $\varphi_{t-1}(v) = t$ at the end of the ordering and $\Gamma_t^B \dfn \Gamma_{t-1}^B$.
	If $\tildadeg_{t-1}^B ( A^B_{t}) \ge \ell$ and $\Omega^B_{t-1} = \varnothing$, then set $\Omega_{t}^B \dfn \sigma_{t-1}^B(A_{t}^B)$; otherwise set $\Omega^B_t \dfn \Omega^B_{t-1}$.
	
	\item \label{step:classifyingt} \textbf{Classifying~$t$}.\\
	We now classify~$t$ into one of four types, which will use to determine whether (and how) $t$ can be added to the blue path-forest $F^B_{t-1}$.
	Let $J \dfn \{ v \in  N^B_{K_{\mathbb{N}}}(t,  B \setminus [t] ) \colon d_{F^B_{t-1}}(v) < 2\}$.
	That is, $J$ is the blue neighbourhood of~$t$, that theoretically we can use to attach~$t$ to $F_{t-1}^B$ using blue forward edges without creating a vertex of degree~$3$.
	If $\Omega^{B}_{t} \ne \varnothing$, then we set $t_{\Omega}$ to be the smallest $t_{\Omega}$ such that $\Omega^{B}_{t_{\Omega}} = \Omega^{B}_{t}$.
	We say that $t$ is 
	\begin{itemize}
		\item \emph{of type~$W$} if $|J| \ge 3$;
		\item \emph{of type~$X$} if $|J| \le 2$ and $\Omega^{B}_{t} = \varnothing$;
		\item \emph{of type~$Y$} if $|J| \le 2$, $\Omega^{B}_{t} \ne \varnothing$ and $d_{F_{t_{\Omega}}^R}(t) <2$;
		\item \emph{of type~$Z$} if $|J| \le 2$, $\Omega^{B}_{t} \ne \varnothing$ and $d_{F_{t_{\Omega}}^R}(t) =2$.
	\end{itemize}
	
	\item \label{step:tbluepf} \textbf{Trying to add $t$ to the blue path-forest}.\\
	Depending on the type of $t$, we have three different cases.
	
	\begin{enumerate}[label=\textbf{Step 4\alph*:}, ref=4\alph*, wide, labelwidth=!, labelindent=0pt]
		\item \label{step:typeW} \textbf{$t$ is of type~$W$}.\\
		We add $t$ to $F^B_{t}$ using forward edges.
		By Proposition~\ref{proposition:extendpathforest} (with $F^B_{t-1}, J, t$  playing the roles of $F, J, x$) there exist $j_1, j_2 \in J$ such that $F^B_{t-1} \cup \{ t j_1, t j_2 \}$ is a blue path-forest.
		Further choose $j_1$ and $j_2$ such that $\min \{ j_1, j_2 \}$ is maximised (which is well-defined as $t \in R$ and the colouring is restricted, so $J \subseteq N^B_{K_{\mathbb{N}}}(t)$ is finite).		
		Define $\varphi_{t}(t) = \min \{ j_1, j_2 \}$ and $\varphi_{t}(i) = \varphi_{t-1}(i)$ for all $i \in [t-1]$.
		Set $F_{t}^B \dfn F^B_{t-1} \cup \{ t j_1, tj_2 \}$, $A_t^R \dfn A_{t-1}^R$, $\Omega^R_t \dfn \Omega^R_{t-1}$ and $\Gamma_t^R \dfn \Gamma_{t-1}^R$.
		
		\item \label{step:typeXZ} \textbf{$t$ is of type~$X$ or $Z$}.\\
		In this case, we will not add~$t$ to~$F^B_{t-1}$ at all. 
		Define $\varphi_{t}(t) = t$ and $\varphi_{t}(i) = \varphi_{t-1}(i)$ for all $i \in [t-1]$.
		Set $F_{t}^B \dfn F^B_{t-1}$.
		Let~$A_t^R$ be obtained from $A_{t-1}^R$ by adding $t$ to the end of the ordering.
		If $\tildadeg_{t}^R ( A^R_{t}) \ge \ell$ and $\Omega^R_{t-1} \dfn \varnothing$, set $\Omega_{t}^R \dfn \sigma_{t}^R(A_{t}^R)$; otherwise set $\Omega^R_t \dfn \Omega^R_{t-1}$.
		Finally, set $\Gamma_t^R \dfn \Gamma_{t-1}^R$.
		
		\item \label{step:typeY} \textbf{$t$ is of type~$Y$}.\\
		In this case, we will try to add $t$ to $F^B_t$ using backwards edges if $\Gamma^R_t$ has reached the correct size.
		Define $\varphi_t$, $A^R_t$ and $\Omega^R_t$ as in Step~\ref{step:typeXZ}.

		If $|\Gamma_{t-1}^R \cup t| < \ell/2$, then set~$F_{t}^B \dfn F_{t-1}^B$ and $\Gamma_{t}^R \dfn \Gamma_{t-1}^R \cup t$ and finish this step.
		Otherwise, we have $|\Gamma_{t-1}^R \cup t| = \ell/2$.
		By Proposition~\ref{proposition:usingavailabledegree} (with $F_{t-1}^B , \Omega^{B}_{t}, \Gamma_{t-1}^R \cup t$ playing the roles of~$F, X, Y$), we obtain a blue path-forest $F'$ such that~$F_{t-1}^B \cup F'$ is a blue path-forest which covers all but at most $4$ vertices of $\Gamma_{t-1}^R \cup t$.
		Let~$F_{t}^B \dfn F^B_{t-1} \cup F'$.
		Adding the new blue edges to form $F^B_t$ means we need to redefine $\Omega^B_t$ accordingly, as follows: if~$\tildadeg_{t}^B ( A^B_{t}) \ge \ell$, then redefine $\Omega_{t}^B \dfn \sigma_{t}^B(A_{t}^B)$; otherwise redefine $\Omega^B_t \dfn \varnothing$.
		Finally, define $\Gamma_t^R \dfn \varnothing$.
	\end{enumerate}
\end{enumerate}

\end{algonice}

\subsection{Correctness and analysis of the algorithm}

First we show that Algorithm~\ref{algo} is well-defined.
For $t \in \mathbb{N}$, define $W_{t}^R$ (and $W_t^B$) to be the set of vertices $v \in [t] \cap R$ (and $v \in [t] \cap B$, respectively) of type~$W$, as in Step~\ref{step:classifyingt} of Algorithm~\ref{algo}.
Similarly, define $X_t^{\ast}, Y_t^{\ast}, Z_t^{\ast}$ for $\ast \in \{ R , B\}$.

\begin{lemma} \label{lemma:algovalid}
Let $\ell \in \mathbb{N}$ be even. 
Then Algorithm~\ref{algo} is well defined. 
\end{lemma}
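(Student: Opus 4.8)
The plan is to verify, by induction on the round number $t$, that every operation the algorithm performs at round $t$ is legitimate: each invocation of Proposition~\ref{proposition:extendpathforest} and Proposition~\ref{proposition:usingavailabledegree} meets its hypotheses, each application of $\sigma^\ast_{t-1}$ or $\sigma^\ast_t$ is only made when the relevant $\tildadeg$ is at least $\ell$ (so $\sigma$ is defined), and the newly constructed $F^\ast_t$ are indeed monochromatic path-forests in the sense of Section~\ref{section:pathforest} (edges of the correct colour, endpoints of the correct colour, vertices alternating red/blue). The base case $t=0$ is immediate since all objects are empty. For the inductive step we assume the objects at round $t-1$ are well-defined and case on the type of $t$ as determined in Step~\ref{step:classifyingt}; by symmetry we may assume $t\in R$.

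First I would handle Step~1 and Step~2, which are essentially bookkeeping: $F^R_t=F^R_{t-1}\cup t$ adds an isolated red vertex, so it is trivially a red path-forest; the update of $A^B_t$ and $\Gamma^B_t$ is just list manipulation; and the possible assignment $\Omega^B_t:=\sigma^B_{t-1}(A^B_t)$ is guarded by the explicit hypothesis $\tildadeg^B_{t-1}(A^B_t)\ge\ell$, so $\sigma^B_{t-1}$ is defined there. Step~\ref{step:classifyingt} only reads data, so nothing to check. For Step~\ref{step:typeW} ($t$ of type $W$), the definition of type $W$ gives $|J|\ge 3$, i.e. $d_{K_\NN}(t,J)\ge 3$, with $J$ consisting of vertices of degree $<2$ in $F^B_{t-1}$; this is exactly the hypothesis of Proposition~\ref{proposition:extendpathforest} with $(F^B_{t-1},J,t)$ in the roles of $(F,J,x)$, so suitable $j_1,j_2$ exist and $F^B_t$ is a blue path-forest. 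One should also note $J$ is finite (as $t\in R$ and the colouring is restricted, so $t$ has only finitely many blue neighbours), so ``maximise $\min\{j_1,j_2\}$'' is well-defined, and that $j_1,j_2\in B$ with $t\in R$ keeps the red/blue alternation along the new path; the update of $\varphi_t$ extends a function on $[t-1]$ to one on $[t]$. Steps \ref{step:typeXZ} and the first branch of \ref{step:typeY} only add $t$ to an ordered list, extend $\varphi$, and possibly set $\Omega^R_t:=\sigma^R_t(A^R_t)$ under the explicit guard $\tildadeg^R_t(A^R_t)\ge\ell$, so again nothing can go wrong.

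The substantive case is the second branch of Step~\ref{step:typeY}, where $|\Gamma^R_{t-1}\cup t|=\ell/2$ and we invoke Proposition~\ref{proposition:usingavailabledegree} with $(F^B_{t-1},\Omega^B_t,\Gamma^R_{t-1}\cup t)$ in the roles of $(F,X,Y)$. Here one must check the two numbered hypotheses of that proposition. Condition~\ref{item:x0x1size} asks $\sum_{x\in\Omega^B_t}(2-d_{F^B_{t-1}}(x))\ge 2|\Gamma^R_{t-1}\cup t|=\ell$; since type~$Y$ forces $\Omega^B_t\ne\varnothing$, $\Omega^B_t$ equals $\sigma^B_{t'}(A^B_{t'})$ for some earlier round $t'=t_\Omega$, hence $\tildadeg^B_{t_\Omega}(\Omega^B_t)\in\{\ell,\ell+1\}$ by the remark following the definition of $\sigma$; the point is that $d_{F^B_{t-1}}(x)$ has not increased beyond $d_{F^B_{t_\Omega}}(x)$ for $x\in\Omega^B_t$, because between rounds $t_\Omega$ and $t$ the blue path-forest is only enlarged by forward edges from type-$W$ vertices and by $\sigma^B$-selected edges, and one needs the invariant that these do not touch $\Omega^B_t$ — this is the place where the role of $\varphi$ and the ordering of $A^B$ enters, and it is the main obstacle. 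Condition~\ref{item:x0x1degree} asks $d_{K_\NN}(x,\Gamma^R_{t-1}\cup t)\ge|\Gamma^R_{t-1}\cup t|-2=\ell/2-2$ for every $x\in\Omega^B_t$; here we use that $x\in A^B_{t_\Omega}$ means $t_\Omega\ge\varphi(x)$, so by the design of $\varphi$ the vertex $x$ (blue) sends red edges to all but finitely many — in fact, to all relevant — larger blue vertices, and the vertices of $\Gamma^R_{t-1}\cup t$ are blue vertices exceeding $x$, all of which were classified after round $t_\Omega$. I would isolate both invariants as sub-claims, prove them by the same induction, and then the conclusion of Proposition~\ref{proposition:usingavailabledegree} gives a blue path-forest $F'\subseteq K_\NN[\Omega^B_t,\Gamma^R_{t-1}\cup t]$ with $F^B_{t-1}\cup F'$ a blue path-forest covering all but at most $4$ vertices of $\Gamma^R_{t-1}\cup t$; finally the redefinition of $\Omega^B_t$ is again guarded by an explicit $\tildadeg$ condition, and setting $\Gamma^R_t:=\varnothing$ needs no check. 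Assembling these cases completes the induction and shows Algorithm~\ref{algo} is well-defined.
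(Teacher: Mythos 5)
Your overall framework --- induction on $t$, checking that each invocation of Propositions~\ref{proposition:extendpathforest} and~\ref{proposition:usingavailabledegree} meets its hypotheses --- is the same as the paper's, and you correctly single out the second branch of Step~\ref{step:typeY} as the substantive case, and correctly note that hypothesis~\ref{item:x0x1size} needs the invariant that the degrees in $F^B$ of vertices in $\Omega^B_t$ have not moved since round $t_\Omega$. However, your treatment of hypothesis~\ref{item:x0x1degree} has a genuine gap, together with a colour mix-up that suggests the underlying picture is off. When $t\in R$ is of type~$Y$, the set $\Gamma^R_{t-1}\cup t$ consists of \emph{red} vertices (red type-$Y$ vertices waiting to be attached to $F^B$ by backward blue edges), not blue as you write; and for $x\in\Omega^B_t\subseteq B$ the required inequality $d^B_{K_\NN}(x,\Gamma^R_{t-1}\cup t)\ge\ell/2-2$ is equivalent to the bound $d^R_{K_\NN}(x,\Gamma^R_{t-1}\cup t)\le 2$ on the \emph{red} degree. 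This is exactly the paper's invariant~\ref{itm:correctnessnoteasy}, and an appeal to ``the design of $\varphi$'' does not establish it: since $x$ is blue, it trivially has only finitely many red neighbours, and nothing you say rules out three of them landing in $\Gamma^R_{t-1}\cup t$. The actual argument shows that any red neighbour $u$ of $x$ in $\Gamma^R_{t-1}\cup t$ lies in the set $J$ computed at round $x$ (using $u>t_\Omega\ge\varphi_t(x)\ge x$ and $d_{F^R_{x-1}}(u)\le d_{F^R_{t_\Omega}}(u)<2$, the latter from $u$ being of type~$Y$), and then splits on the type of $x$: if $x$ is not of type~$W$ then $|J|\le 2$ directly, while if $x$ is of type~$W$, three such red neighbours, all exceeding $\varphi_t(x)$, would by Proposition~\ref{proposition:extendpathforest} give an admissible pair $j_1,j_2$ contradicting the maximisation of $\min\{j_1,j_2\}$ in Step~\ref{step:typeW}. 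This case split and the maximality argument are the heart of the lemma and are missing from your sketch.
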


\begin{proof}
Suppose that $K_{\mathbb{N}}$ has a restricted $2$-edge-colouring.
We prove by induction on~$t$ that $F_t^{\ast}$, $\Omega_t^{\ast}$, $\Gamma_t^{\ast}$, $A_t^{\ast}$, $\varphi_t$, $W_t^{\ast}$, $X_t^{\ast}$, $Y_t^{\ast}$, $Z_t^{\ast}$ given by Algorithm~\ref{algo} satisfy the following properties (and similar statements hold if we interchange $R$ and $B$):
\begin{enumerate}[label={\rm(\roman*)}]
	\item \label{itm:correctnessfirsteasy} $\varphi_t(i) \ge i$ for all $i \in [t]$ and $\varphi_t(i) = \varphi_{t-1}(i)$ for all $i \in [t-1]$;
	\item if $i \in R \cap [t]$ and $\varphi_t(i) > i$, then $\varphi_t(i) \in B$;
	\item $A^R_t, \Omega^R_t, \Gamma^R_t, \subseteq R \cap [t]$, $\Omega^R_t \subseteq A^R_t$ and $A^R_{t-1} \subseteq A^R_t$;
	\item \label{itm:phioftheavailable} $\{\varphi_t(v) \colon v \in A^R_t\} \subseteq [t]$;
	\item if $\Omega^R_t \neq \varnothing$, then 
	$\tildadeg^R_t( \Omega^R_t ) \in \{ \ell, \ell + 1 \}$;
	\item $|\Gamma^R_t| < \ell/2$;
	\item \label{itm:correctnesslasteasy} if $y > t$ and $y \in R$, then $y \notin V(F^B_t)$;
	\item \label{itm:correctnessnoteasy} if $\Omega^R_t, \Gamma^B_t \neq \varnothing$, then  $\max \Omega^R_t \le \max\{ \varphi_t(v) \colon v \in \Omega^R_t\} < \min \Gamma^B_t$ and
	for all $v \in \Omega^R_t$, $d^B_{K_{\mathbb{N}}}(v, \Gamma^B_t) \leq 2$.
\end{enumerate}
Note that these properties imply the lemma.
By our construction, \ref{itm:correctnessfirsteasy}--\ref{itm:correctnesslasteasy} hold.

To see \ref{itm:correctnessnoteasy}, let $t_{\Omega}$ to be the smallest $t_{\Omega}$ such that $\Omega^{R}_{t_{\Omega}} = \Omega^{R}_{t}$.
%By (iv), we have $\varphi_t(v) = \varphi_{t_\Omega}(v) \le t_\Omega$ for all $v \in \Omega^B_r$
Consider any $v \in \Omega^R_t$.
Clearly $v \le \varphi_t(v) \le t_{\Omega} \le \min \Gamma^B_t$ by~\ref{itm:correctnessfirsteasy} and \ref{itm:phioftheavailable}.
So the first assertion of~\ref{itm:correctnessnoteasy} holds.
Let $J \dfn \{ j' \in N^B_{K_{\mathbb{N}}}(v, B \setminus [v])  \colon d_{F^B_{v-1}}(j') < 2\}$, which is $J$ defined at round number $v$.
For all $u \in \Gamma^B_t \subseteq Y^B_t$, we have $d_{F^B_{v-1}}(u) \le d_{F^B_{t_\Omega}}(u) < 2$.
Hence $\Gamma^B_t \subseteq J$.
If $v$ is not of type~$X$, then $d^B_{K_{\mathbb{N}}}(v, \Gamma^B_t) \le d^B_{K_{\mathbb{N}}} (v, J) \leq 2$.
If $v$ is of type~$X$, then $d^B_{K_{\mathbb{N}}} (v, \Gamma^B_t) \ge 3$ would contradict the maximality of~$\varphi_t(v)$ in Step~\ref{step:typeW}.
Hence we have $d^B_{K_{\mathbb{N}}}(v, \Gamma^B_t) \leq 2$ for all $v \in \Omega^R_t$.
\end{proof}

Recall that for every $\ast \in \{R,B\}$ and $t \in \NN$, $c^\ast_t  = |V(F^\ast_t) \cap [t]|$.
In the next two lemmas, we collect some useful information from the algorithm.

\begin{lemma} \label{lemma:usefulfacts}
Let $\ell \in \NN$ be even.
Suppose that $K_{\mathbb{N}}$ has a restricted $2$-edge-colouring.
Let $F_t^{\ast}$, $\Omega_t^{\ast}$, $\Gamma_t^{\ast}$, $A_t^{\ast}$, $\varphi_t$, $W_t^{\ast}$, $X_t^{\ast}$, $Y_t^{\ast}$, $Z_t^{\ast}$ be as defined by Algorithm~\ref{algo}.
Then the following holds for all $t \in \mathbb{N}$ (and similar statements hold if we interchange $R$ and $B$):
\begin{enumerate}[label={\rm(\roman*)}]
	\item \label{itm:u0} $|R \cap [t]| = |W^{R}_t|+|X^{R}_t|+|Y^{R}_t|+|Z^{R}_t|$;
	\item \label{itm:u0'} $F^R_t, W^{R}_t,X^{R}_t,Y^{R}_t,Z^{R}_t$ are nested;
	%\item \label{itm:u1} $V(F^R_t) \cap [t] \subseteq V(F^R_{t'}) \cap [t]$ for all $t \le t'$;
%	\item \label{itm:u8a} $\{\varphi_t(v) \colon v \in A^R_t\} \subseteq [t]$; \COMMENT{already mentioned in Lemma~\ref{lemma:algovalid}}
	\item \label{itm:u3} if there exists $t' \ge t$ such that $\Omega^B_{t''} \ne \varnothing$ for all $t\le t'' \le t'$, then $X^R_t = X^R_{t'}$;
%		\COMMENT{For the case if $\rho >\ell$, then $X = \varnothing$.}
	\item \label{itm:u5} if $v \in V(F^R_t)$ with $v >t$, then $v \in R$ and $N_{F^{R}_t} (v) \subseteq W^B_t$;
	\item \label{itm:u5'} if $v \in B $ with $d_{F^R_t}(v) >0$, then $v \in W^B_t \cup Y^B_t$;
	\item \label{itm:u6} if $\rho_t^R(A_t^R) \ge \ell$, then $\Omega^R_t \neq \varnothing$;
	%\item \label{itm:u6'} if $\Omega^R_t \neq \varnothing$, then $\max \Omega^R_t \le \max\{ \varphi_t(v) \colon v \in \Omega^R_t\} < \min \Gamma^B_t$
		\item \label{itm:u2} $c^R_{t} \ge 	(1 - 8 /\ell ) ( t - |Z^B_{t}| - |X_{t}^B| ) - \ell/2$;
%		\COMMENT{Easier to calculate $c_R^t$.}
		\item \label{itm:u7} $2 |Y^B_{t'} \setminus Y^B_t| \ge \rho_{t}^R (A^R_t) - \rho_{t'}^R (A^R_t)$ for $t' \ge t$;
	\item \label{itm:u8} if $\rho_{t'-1}^R(A_t^R) \ge \ell$ for some $t' \ge t$, then $|Z^B_{t'}| \le |W^R_t|$.
\end{enumerate}
\end{lemma}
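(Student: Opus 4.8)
The plan is to derive all nine assertions by a careful reading of Algorithm~\ref{algo}, several of them by a simultaneous induction on the round~$t$, using throughout the invariants established in Lemma~\ref{lemma:algovalid}. The first three are essentially bookkeeping. Item~(i) holds because in Step~\ref{step:classifyingt} the vertex~$t$ is given exactly one of the labels $W,X,Y,Z$, and item~(ii) because that label, like membership of~$F^R_t$, is never changed at a later round. For item~(iii), a red vertex is put into type~$X$ only at a round where $\Omega^B=\varnothing$; in Step~2 the set $\Omega^B$ is only ever switched from empty to non-empty or left unchanged, and it is made empty again solely at the close of a backward batch in Step~\ref{step:typeY}, so the hypothesis that $\Omega^B_{t''}\neq\varnothing$ for all $t\le t''\le t'$ forces every red $v\in(t,t']$ out of type~$X$, i.e.\ $X^R_t=X^R_{t'}$.

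Items~(iv) and~(v) are obtained by tracking how~$F^R$ gains vertices and edges: this happens only in Step~1 (adding the isolated red vertex~$t$), in Step~\ref{step:typeW} applied to a blue type-$W$ vertex (which attaches two red forward neighbours), and in a backward batch in Step~\ref{step:typeY}, where the path-forest of Proposition~\ref{proposition:usingavailabledegree} is drawn inside $G[\Omega^R,\Gamma^B\cup\{t\}]$, all of whose vertices lie in~$[t]$. Consequently a vertex of~$F^R_t$ exceeding~$t$ must be a red forward neighbour chosen by a blue type-$W$ vertex $\le t$ (item~(iv)), and a blue vertex of positive $F^R_t$-degree must be such a type-$W$ vertex or a vertex consumed by a completed $\Gamma^B$-batch, hence of type~$Y$ (item~(v)). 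Item~(vi) is a one-round invariant: whenever $\rho^R_t(A^R_t)\ge\ell$ we either already had $\Omega^R_{t-1}\neq\varnothing$ or we set $\Omega^R_t=\sigma^R_t(A^R_t)\neq\varnothing$ in Step~\ref{step:typeXZ} or~\ref{step:typeY}, and the unique place where $\Omega^R$ is emptied, at the end of a backward batch, is executed only when $\rho^R_t(A^R_t)<\ell$.

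For item~(vii) I would estimate $c^R_t=|V(F^R_t)\cap[t]|$ from below by the pairwise disjoint sets $R\cap[t]$ (entirely in~$F^R_t$ by Step~1), $W^B_t$ (all attached in Step~\ref{step:typeW}), and those blue type-$Y$ vertices of~$[t]$ already consumed by a completed backward batch. If $m$ batches have been completed by round~$t$, they have consumed exactly $m\ell/2$ type-$Y$ blue vertices, so $|Y^B_t|=|\Gamma^B_t|+m\ell/2$, and by Proposition~\ref{proposition:usingavailabledegree} at most $4$ of the $\ell/2$ vertices of each batch fail to enter~$F^R_t$; combined with $|\Gamma^B_t|<\ell/2$ this gives $|V(F^R_t)\cap Y^B_t|\ge(1-8/\ell)|Y^B_t|-\ell/2$. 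Using $|R\cap[t]|+|W^B_t|+|Y^B_t|=t-|X^B_t|-|Z^B_t|$ (item~(i) for the blue classes, together with $t=|R\cap[t]|+|B\cap[t]|$ as the colouring is restricted) then yields the stated bound.

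The two remaining items,~(viii) and~(ix), are the substantive accounting statements, and I expect the main obstacle to lie in making the round indices match rather than in the combinatorics itself. For~(viii): the quantity $\rho^R_t(A^R_t)-\rho^R_{t'}(A^R_t)$ is the total red degree gained by vertices of~$A^R_t$ during rounds $t+1,\dots,t'$, and such a gain occurs only inside a backward batch triggered by a blue type-$Y$ vertex, where by Proposition~\ref{proposition:usingavailabledegree} each consumed vertex of $\Gamma^B\cup\{t''\}$ receives exactly two red edges; hence at most $2$ units are spent per consumed blue type-$Y$ vertex, and it remains to charge those consumed vertices to $Y^B_{t'}\setminus Y^B_t$. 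The delicate case is the vertices already waiting in~$\Gamma^B_t$ at round~$t$, which is where one uses the invariant of Lemma~\ref{lemma:algovalid} relating $\Omega^R_t$, the images $\varphi_t(\Omega^R_t)$ and $\Gamma^B_t$, together with the fact that a vertex of~$\Gamma^B$ has no $F^R$-edge until the batch that removes it. For~(ix): arguing as for~(v), a blue type-$Z$ vertex~$v$ carries two blue edges already present at round~$v_\Omega$; since~$v$ is not of type~$W$ it only enters~$A^B$ at round~$v\ge v_\Omega$, so neither of these edges can have come from a backward batch, and they must therefore come from two distinct red type-$W$ vertices processed at rounds~$\le v_\Omega$. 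The hypothesis $\rho^R_{t'-1}(A^R_t)\ge\ell$, which by item~(vi) keeps $\Omega^R$ non-empty throughout $[t,t'-1]$, is then used to confine the epoch~$v_\Omega$, hence those feeding type-$W$ vertices, to~$[t]$; counting the forward blue edges created by the at most $|W^R_t|$ red type-$W$ vertices gives $2|Z^B_{t'}|\le 2|W^R_t|$. Pinning down these final index estimates is the part that will require real care.
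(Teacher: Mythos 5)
Your approach matches the paper's throughout: items (i)--(vi) are, as you say, read off the construction (the paper simply asserts they ``hold by our construction''), and for item (vii) your decomposition of $Y^B_t$ into completed batches of size $\ell/2$ plus the unfinished $\Gamma^B_t$ is precisely the paper's argument. For item (viii) the paper also charges the decrease of $\rho^R(A^R_t)$ to backward batches at two units per consumed blue type-$Y$ vertex, and for item (ix) it likewise uses $\rho^R_{t'-1}(A^R_t)\ge\ell$ to force $\Omega^R_{t''}\subseteq A^R_t$ and $\max\varphi_{t''}(\Omega^R_{t''})\le t$, concludes $d_{F^B_t}(z)=2$ for every $z\in Z^B_{t'}$, and double-counts $e(F^B_t[Z^B_{t'},W^R_t])$.

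That said, for item (viii) you have correctly put your finger on a genuine subtlety that the paper treats very tersely, but your proposed fix does not obviously close it. When a backward batch fires at some round $s\in(t,t']$, the set it consumes is $\Gamma^B_{s-1}\cup\{s\}$, which may contain up to $|\Gamma^B_t|$ vertices that were already waiting at round~$t$ and hence lie in $Y^B_t$, not in $Y^B_{t'}\setminus Y^B_t$. Since $\Gamma^B_t\neq\varnothing$ forces $\Omega^R_s=\Omega^R_t$ to have been set before $\min\Gamma^B_t\le t$, we have $\Omega^R_s\subseteq A^R_t$, so all $2\,|V(F')\cap(\Gamma^B_{s-1}\cup\{s\})|$ units of degree created by that batch do deplete $\rho^R(A^R_t)$, even though only $\ell/2-|\Gamma^B_t|$ new type-$Y$ vertices appear on the right-hand side. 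Neither the bound $d^B_{K_\NN}(v,\Gamma^B_t)\le 2$ nor the ordering $\max\varphi_t(\Omega^R_t)<\min\Gamma^B_t$ from Lemma~\ref{lemma:algovalid}\ref{itm:correctnessnoteasy}, which you invoke, removes this contribution; they concern which edges are available, not how the already-consumed degree is charged. To make (viii) hold as stated one needs an additional argument (or an additive $O(\ell)$ slack, which is harmless in the applications in Lemmas~\ref{lemma:nomoreavailable} and~\ref{lemma:nopurplevertices}, where a $-\ell$ error term is already carried); as written, both your sketch and the paper's one-line justification leave this step unaccounted for. Your sketch of (ix) is essentially sound and coincides with the paper's, though you should make explicit that the epoch $z_\Omega$ of a vertex $z\in Z^B_{t'}$ is $\le t$ precisely because $\rho^R_{t''}(A^R_t)\ge\ell$ for $t\le t''<t'$ keeps $\Omega^R_{t''}$ inside $A^R_t$, whence $\max\varphi_{t''}(\Omega^R_{t''})\le t$ by Lemma~\ref{lemma:algovalid}\ref{itm:phioftheavailable}.
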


\begin{proof}
Note that \ref{itm:u0}--\ref{itm:u6} hold by our construction.

Now we prove \ref{itm:u2}.
By our construction, we have $W^B_t, R \cap [t] \subseteq V(F^R_t)$.
%Suppose that $|Y^B_t| \ge \ell /2$.
Partition $Y^B_t$ into $\Gamma'_1, \Gamma'_2, \dots, \Gamma'_s, \Gamma'_{s+1}$ (with $\Gamma'_{s+1}$ possibly empty) such that, for all $i \in [s]$, $|\Gamma'_i| = \ell /2$, $\max \Gamma'_i < \min \Gamma'_{i+1}$ and $|\Gamma'_{s+1}| < \ell/2$.
In other words, $\Gamma'_1, \Gamma'_2, \dots, \Gamma'_s, \Gamma'_{s+1}$ is a partition of $Y^B_t$ into sets of `consecutive' $\ell /2$ vertices.
Consider any $i \in [s]$.
Let $t_i \dfn \max \Gamma'_{i}$.
Since $t_i \in Y_{t_i}^B$, Step~\ref{step:typeY} implies that we have $|\Gamma_{t_i-1}^B| = \ell/2 - 1$, $\Gamma_{t_i-1}^B \cup t_i = \Gamma'_{i}$ and $\Gamma_{t_i}^B = \varnothing$.
Moreover, all but at most $4$ vertices of~$\Gamma'_{i}$ are added to~$F^R_t$ (at round number $t_i$).
Therefore, 
\begin{align*}
	c^R_{t} & = |V(F^R_t) \cap [t] |  \ge |R \cap [t]| + |W^B_t| + \sum_{i \in [s]} (|\Gamma'_{i}|-4) \\
	& = |R \cap [t]| + |W^B_t| + \sum_{i \in [s]} (1- 8/\ell) |\Gamma'_{i}| \\
	& \ge |R \cap [t]| + |W^B_t| +  (1 - 8/\ell )(|Y^B_t| - \ell/2 )\\
	& \ge (1 - 8/\ell ) ( t - |X^B_t| - |Z^B_t| ) - \ell/2.
\end{align*}
Hence \ref{itm:u2} holds.

To see~\ref{itm:u7}, note that $\rho_{t''}^R (A^R_t)$ is a decreasing sequence in $t''$ and it decreases if and only if we join some vertices of $A^R_t$ to some vertices in $Y^B_{t'} \setminus Y^B_{t}$ with red edges to form the red path-forest. 
Each such vertex of $y \in Y^B_{t'} \setminus Y^B_{t}$ reduces $\rho_{t}^R (A^R_t)$ by at most~$2$.

To see~\ref{itm:u8}, since $\rho_{t'-1}^R(A_t^R) \ge \ell$ for some $t' \ge t$, we have $\Omega_{t''}^R \subseteq A_t^R$ for all $t \le t'' < t'$.
Note that 
\begin{align*}
	\max_{v \in \Omega_{t''}^R }\{\varphi_{t''} (v) \}
	& \le \max_{v \in A_t^R }\{\varphi_{t''} (v) \} \le t.
\end{align*}
Consider any $z \in Z^B_{t'}$.
By Step~\ref{step:classifyingt} of Algorithm~\ref{algo}, this means that $d_{F_{t}^B}(z) =2$. 
Hence $d_{F_{t}^B}(z) = 2$ for all $z \in  Z^B_{t'}$.
By~\ref{itm:u5}, $N_{F^{B}_t} (z) \subseteq W^R_t$ for all $z \in Z^B_{t'}$.
By counting the number of edges in $F^{B}_t [ Z^B_{t'},  W^R_t]$, we have 
\begin{align*}
	2 |Z^B_{t'}| = e( F^{B}_t [ Z^B_{t'},  W^R_t] ) \le 2 |W^R_t|
\end{align*}
implying~\ref{itm:u8}.
\end{proof}

\begin{lemma} \label{lemma:rho}
Let $\ell \in \NN$ be even.
Suppose that $K_{\mathbb{N}}$ has a restricted $2$-edge-colouring.
For all $t \in \mathbb{N}$, let $F_t^{\ast}, \Omega_t^{\ast}, \Gamma_t^{\ast}, A_t^{\ast}, \varphi_t, W_t^{\ast}, X_t^{\ast}, Y_t^{\ast},Z_t^{\ast}$ be as defined by Algorithm~\ref{algo}.
Then there exist $Y_t^{\ast} \subseteq D^{\ast}_t \subseteq  W_t^{\ast} \cup Y_t^{\ast}$ for all $t \in \mathbb{N}$ and~$\ast \in \{R,B\}$ such that (where similar statements hold if we interchange $R$ and $B$): 
\begin{enumerate}[label={\rm(\roman*)}]
	\item \label{itm:rho1} $\rho_{t'}^R (A^R_{t'}) - \rho_{t}^R (A^R_t) \leq 2 | D_{t'}^R \setminus D_{t}^R | + 2 | X_{t'}^R \setminus X_{t}^R |$, for every $t' \ge t$;
	\item \label{itm:rho2} $2 |D_t^B| \ge 2 |D_t^R \cup X_t^R \cup Z_t^R| - \rho_{t}^R (A_{t}^R)$;
	\item \label{itm:rho2strong} if $\rho^B_{t'-1}(A^B_t) \ge \ell$ for some $t' \ge t$, then $2 |D_t^B| \ge 2 |D_t^R| + |X_{t'}^R \cup Z_{t'}^R| - \rho_{t}^R (A_{t}^R)$;
	\item \label{itm:rho3} $c_t^R + c_t^B + \frac{1}{2} \rho_{t}^R (A_{t}^R) + \frac{1}{2}\rho_{t}^B (A_{t}^B)  \ge 2( 1- 8 /\ell)t - \ell$.
\end{enumerate}
\end{lemma}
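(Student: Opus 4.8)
The statement packages four bookkeeping inequalities that track how the ``available degree'' $\rho^\ast_t(A^\ast_t)$ evolves and how it relates to the covered vertices $c^R_t, c^B_t$. The plan is to first define the sets $D^\ast_t$, then prove \ref{itm:rho1}--\ref{itm:rho3} more or less in order, using the facts collected in Lemma~\ref{lemma:usefulfacts} as the main engine. For the definition: $\rho^R_{t''}(A^R_t)$ decreases (as $t''$ grows past $t$) precisely when red backward edges are added between $\Omega^R_{t''} \subseteq A^R_t$ and vertices of $\Gamma^B$-batches, i.e.\ vertices that end up in $Y^B$. So I would let $D^R_t$ consist of $Y^R_t$ together with those vertices of $W^R_t$ that receive a red edge ``from the future'' in some $F^R_{t'}$, $t' \ge t$; concretely, the neighbourhood in the eventual red path-forest restricted to $[t]$. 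The inclusion $Y^R_t \subseteq D^R_t \subseteq W^R_t \cup Y^R_t$ will be immediate from Lemma~\ref{lemma:usefulfacts}\ref{itm:u5}--\ref{itm:u5'} (blue vertices touched by a red edge are of type $W^B$ or $Y^B$; swapping colours gives the red analogue), and the point is that $D^\ast_t$ is ``stable'': its intersection with $[t]$ does not change as the algorithm runs further, so the difference $D^R_{t'} \setminus D^R_t$ in \ref{itm:rho1} genuinely captures new vertices covered by red backward edges between rounds $t$ and $t'$.

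\textbf{Steps.} For \ref{itm:rho1}: by Lemma~\ref{lemma:usefulfacts}\ref{itm:u7} (applied with the roles adjusted), $2|Y^B_{t'} \setminus Y^B_t| \ge \rho^R_t(A^R_t) - \rho^R_{t'}(A^R_t)$, but I actually want a bound in terms of $D^R$ and $X^R$, not $Y^B$. The cleaner route is to note that each decrement of $\rho^R$ between rounds $t$ and $t'$ is caused either by adding $t''$ of type $X$ or $Z$ to $A^R$ (which can only happen if the new vertex has degree $<2$, contributing to $\rho$, not decreasing it — so actually $X,Z$ additions \emph{increase} $\rho$) or by a backward-edge batch that lowers the degree slack of $\Omega^R \subseteq A^R_t$. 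Each such batch reduces $\rho^R_{t''}(A^R_t)$ by at most $2$ per vertex newly covered in $D^R_{t'} \setminus D^R_t$; the $X^R$ term absorbs the possibility that $A^R$ itself grew (new low-degree vertices raising $\rho$), so rearranging gives the stated inequality with the ``$+2|X^R_{t'} \setminus X^R_t|$'' slack. For \ref{itm:rho2}: count available degree at round $t$. Every vertex of $D^R_t \cup X^R_t \cup Z^R_t$ lies in $[t] \cap R$ and contributes to $F^R_t$ or to $A^R_t$; a double-counting of $\rho^R_t(A^R_t) = \sum_{v \in A^R_t}(2 - d_{F^R_t}(v))$ against the edges actually placed, combined with the fact that vertices of types $X,Z$ sit in $A^R_t$ with degree at most $1$ (so slack at least $1$ each), yields $2|D^R_t| \ge$ (twice the ``really covered'' count) $\ge 2|D^R_t \cup X^R_t \cup Z^R_t| - \rho^R_t(A^R_t)$ after moving the slack to the other side. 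Here I would be careful that $D^R_t, X^R_t, Z^R_t$ need not be disjoint, so ``$\cup$'' matters; the inequality should still go through by inclusion–exclusion since $X,Z$ vertices are not in $V(F^R_t)$ while $D^R_t \subseteq V(F^R_t)$ modulo the backward edges. Statement \ref{itm:rho2strong} is the same argument but sharpened: the hypothesis $\rho^B_{t'-1}(A^B_t) \ge \ell$ forces $\Omega^B_{t''} \subseteq A^B_t$ for $t \le t'' < t'$, hence by Lemma~\ref{lemma:usefulfacts}\ref{itm:u3} (colours swapped) and \ref{itm:u8} the sets $X^R$ and $Z^R$ are controlled between $t$ and $t'$, so I may replace $X^R_t \cup Z^R_t$ by the larger $X^R_{t'} \cup Z^R_{t'}$ at the cost of dropping the other union terms — the algebra is parallel to \ref{itm:rho2}.

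\textbf{The final inequality.} For \ref{itm:rho3}, I would combine Lemma~\ref{lemma:usefulfacts}\ref{itm:u2}, which reads $c^R_t \ge (1 - 8/\ell)(t - |Z^B_t| - |X^B_t|) - \ell/2$, with its colour-swapped twin for $c^B_t$, and then show that the deficit terms $|Z^B_t| + |X^B_t|$ and $|Z^R_t| + |X^R_t|$ are absorbed by $\tfrac12\rho^R_t(A^R_t) + \tfrac12\rho^B_t(A^B_t)$ up to the $D$-sets. Concretely, from \ref{itm:rho2} (both colours) one gets $\rho^R_t(A^R_t) + \rho^B_t(A^B_t) \ge 2|X^R_t \cup Z^R_t| + 2|X^B_t \cup Z^B_t| - 2|D^R_t| - 2|D^B_t|$, and since $D^R_t, D^B_t \subseteq [t]$ are disjoint (red vs.\ blue vertices in a restricted colouring) with $|D^R_t| + |D^B_t| \le t$, adding $c^R_t + c^B_t$ and using \ref{itm:u2} twice telescopes the $(1-8/\ell)$ factors to give $2(1 - 8/\ell)t - \ell$. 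The bookkeeping of which set inclusions are tight — especially making sure the $\ell/2$ error terms from \ref{itm:u2} add to at most $\ell$ and that the non-disjointness of $D, X, Z$ within one colour never costs more than the $\rho$-slack already budgeted — is where I expect the real work to be; everything else is substitution. The main obstacle is therefore pinning down the correct definition of $D^\ast_t$ so that all four parts hold \emph{simultaneously} with the same sets: \ref{itm:rho1} wants $D$ small (only genuinely backward-covered vertices), while \ref{itm:rho2}--\ref{itm:rho3} want $D$ large enough to dominate $|V(F^\ast_t)|$-type counts, and threading that needle is the crux.
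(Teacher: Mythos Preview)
You have correctly identified that the crux of the lemma is the definition of $D^\ast_t$, but the definition you propose is the wrong one, and this propagates into genuine gaps in all four parts.

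\textbf{The right definition.} The paper sets $U^R_t \dfn \{ w \in W^R_t : \varphi_t(w) \le t \}$ and $D^R_t \dfn U^R_t \cup Y^R_t$. The point is \emph{not} whether $w$ receives red edges from the future, but whether $w$ has already been released into the available set $A^R$. With this choice one has the exact identity $A^R_t = D^R_t \cup X^R_t \cup Z^R_t$ (as unordered sets), since vertices enter $A^R$ either immediately (types $X,Y,Z$) or, if type $W$, at the round $\varphi(w)$. This identity is what makes \ref{itm:rho1} immediate: using $\rho^R_t(Z^R_t)=0$ one has $\rho^R_t(A^R_t) = \rho^R_t(D^R_t) + \rho^R_t(X^R_t)$, and since $D^R_t \subseteq D^R_{t'}$, $X^R_t \subseteq X^R_{t'}$ and $\rho^R_{t'} \le \rho^R_t$ on $A^R_t$, the difference $\rho^R_{t'}(A^R_{t'}) - \rho^R_t(A^R_t)$ is at most $\rho^R_{t'}(D^R_{t'} \setminus D^R_t) + \rho^R_{t'}(X^R_{t'} \setminus X^R_t) \le 2|D^R_{t'} \setminus D^R_t| + 2|X^R_{t'} \setminus X^R_t|$. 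Your ``backward-edge batch'' story for \ref{itm:rho1} is tracking the wrong quantity.

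\textbf{The colour switch in \ref{itm:rho2}.} You write the conclusion as $2|D^R_t| \ge 2|D^R_t \cup X^R_t \cup Z^R_t| - \rho^R_t(A^R_t)$, but the left side must be $2|D^B_t|$. The argument is a double count of edges in $G^R_t \dfn F^R_t[\{1,\dots,t\}]$: every blue vertex with positive degree in $G^R_t$ lies in $D^B_t$ (if it is type~$W^B$ with a neighbour in $[t]$, then $\varphi_t$ of that vertex is $\le t$, so it is in $U^B_t$; otherwise it is in $Y^B_t$ by Lemma~\ref{lemma:usefulfacts}\ref{itm:u5'}), giving $e(G^R_t) \le 2|D^B_t|$. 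On the red side, since all vertices of $F^R_t$ outside $[t]$ are red, $e(G^R_t) = \sum_{u \in R \cap [t]} d_{F^R_t}(u) \ge 2|A^R_t| - \rho^R_t(A^R_t)$. The whole content of \ref{itm:rho2} is this cross-colour edge count; without it you cannot compare $D^B$ to $D^R$.

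\textbf{The cancellation in \ref{itm:rho3}.} Because $D^R_t, X^R_t, Z^R_t$ are pairwise disjoint, \ref{itm:rho2} reads $\rho^R_t(A^R_t) \ge 2|D^R_t| - 2|D^B_t| + 2|X^R_t \cup Z^R_t|$. Adding this to its colour-swapped twin, the $D$-terms \emph{cancel}, giving $\tfrac12\rho^R_t(A^R_t) + \tfrac12\rho^B_t(A^B_t) \ge |X^R_t \cup Z^R_t| + |X^B_t \cup Z^B_t|$. Combined with Lemma~\ref{lemma:usefulfacts}\ref{itm:u2} (applied in both colours) this yields \ref{itm:rho3} directly. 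Your version retains $-|D^R_t|-|D^B_t|$ and then uses $|D^R_t|+|D^B_t| \le t$, which is too weak by a factor of~$t$: you would end up with roughly $(1-8/\ell)t - \ell$ rather than $2(1-8/\ell)t - \ell$.
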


\begin{proof}
Let $U_t^R \dfn \{ w \in W_t^R \colon \varphi_t (w) \le t \}$.
Let $D_t^R \dfn U_t^R \cup Y_t^R$.
Note that $A_t^R  = D_t^R \cup X_t^R \cup Z_t^R$ (here we view $A_t^R$ as an unordered set).
Hence
\begin{align}
	\label{eqn:rho1}
	\rho_t^R(A_t^R)  = \rho_t^R (D_t^R) + \rho_t^R (X_t^R) +\rho_t^R (Z_t^R)
	= \rho_t^R (D_t^R) + \rho_t^R (X_t^R)
	.
\end{align}
as $d_{ F^R_{ t } } (z) = 2$ for all $z \in Z_{t}^R$.
Note that $ U_t^R \subseteq U_{t'}^R$ for~$t < t'$.
Hence
\begin{align*}
	\rho_{t'}^R (A_{t'}^R) & = \rho_{t'}^R (D_{t'}^R) + \rho_{t'}^R (X_{t'}^R) \\
	& = \rho_{t'}^R ( D_{t'}^R \setminus D_{t}^R ) + \rho_{t'}^R (X_{t'}^R \setminus X_{t}^R)  + \rho_{t'}^R (A_{t}^R)\\
	& \le  2 | D_{t'}^R \setminus D_{t}^R | + 2 | X_{t'}^R \setminus X_{t}^R |  + \rho_{t}^R (A_{t}^R)
\end{align*}
implying~\ref{itm:rho1}.

Let $G^R_t \dfn F^R_t[ \{ 1, \dotsc, t \} ]$.
Since $F^R_t$ is a red path-forest, $G^R_t$ is a bipartite graph with vertex classes $R' \subseteq R \cap [t]$ and $B' \subseteq B \cap [t]$.
If $v \in B $ with $d_{F^R_t}(v) >0$, then $v \in W^B_t \cup Y^B_t$ by Lemma~\ref{lemma:usefulfacts}\ref{itm:u5'}.
If $v \in W^B_t $ with $d_{F^R_t}(v) >0$, then we must have $\varphi_t(v) \le t$ and so $v \in U^B_t$.
Hence if $v \in B $ with $d_{F^R_t}(v) >0$, then $v \in  D^B_t$.
Therefore, 
\begin{align}
	\label{eq:degreered}
	e(G^R_t) \leq 2 |D^B_t|. 
\end{align}
On the other hand, since $V(F^R_t) \cap R = R \cap [t] = V(G^R_t) \cap R$,
\begin{align*}
	e(G^R_t) & = \sum_{u \in R \cap [t]} d_{G^R_t}(u) = \sum_{u \in R \cap [t]} d_{F^R_t}(u)\\
	& \ge \sum_{u \in D_t^R \cup X_t^R \cup Z_t^R} d_{F^R_t}(u) 
	= 2 |D_t^R \cup X_t^R \cup Z_t^R| - \rho_{t}^R (A_{t}^R).
\end{align*}
Together with~\eqref{eq:degreered}, we obtain~\ref{itm:rho2}.

To see \ref{itm:rho2strong} proceed similarly but considering the graph $F^R_t[ \{ 1, \dotsc, t \} \cup X^R_{t'} \cup Z^R_{t'} ]$.
Lemma~\ref{lemma:usefulfacts}\ref{itm:u6} and \ref{itm:u3} imply that $X^R_{t'} \setminus X^R_{t} = \varnothing$; together with Lemma~\ref{lemma:usefulfacts}\ref{itm:u5} it implies that for every $u \in Z^R_{t'}$, $N_{F^R_t}(u) \subseteq W^B_t$ and $d_{F^R_{t}}(u) = 2$.
Counting the edges of $F^R_t[  \{ 1, \dotsc, t \} \cup X^R_{t'} \cup Z^R_{t'}  ]$ in two different ways, as before, gives the desired inequality.

By adding~\ref{itm:rho2} and its analogus version, we get
\begin{align}
	\frac12\rho_{t}^R (A_{t}^R) + \frac12 \rho_{t}^B (A_{t}^B) & \ge | X_t^R \cup Z_t^R \cup X_t^B \cup Z_t^B|.
	\label{eqn:rho2}
\end{align}
Lemma~\ref{lemma:usefulfacts}\ref{itm:u2} implies that 
\begin{align*}
	c_t^{R} + c_t^B & \ge 2(1- 8 /\ell) t - | X_t^R \cup Z_t^R \cup X_t^B \cup Z_t^B| - \ell,
\end{align*}
which together with~\eqref{eqn:rho2} implies~\ref{itm:rho3}.
\end{proof}

\subsection{Evolutions of $\rho^R_t(A^R_t)$ and $\rho^B_t(A^B_t)$}
To prove Lemma~\ref{lemma:nopurplevertices}, we will consider the path-forests $F^R_t$, $F^B_t$ for every $t \ge 1$, as constructed by Algorithm~\ref{algo}.
If, given $\eps$ and~$k_0$, for some $t \ge k_0$ we have $\max \{ c^R_t, c^B_t \} \ge ((9 + \sqrt{17})/16 - \eps) t$, then we are done.
Therefore, assuming this is not the case, we will deduce information about the evolution of the parameters $\rho^R_t(A^R_t)$ and $\rho^B_t(A^B_t)$ whenever $t$ increases, which we will use to finish the proof.
(It also suffices to use Lemmas~\ref{lemma:usefulfacts} and~\ref{lemma:rho} instead of appealing to Algorithm~\ref{algo}.)

First, we show that if $\rho_{t}^B ( A_t^B ) \ge \ell$ then there exists $t' > t$ such that $\rho_{t'}^B( A_t^B ) < \ell$ (or we are already done).
That is, almost all vertices $A_t^B$ have degree~$2$ in the red path-forest at round number $t'$. 

\begin{lemma} \label{lemma:emptytheavailable}
Let $\ell \in \NN$ be even.
Suppose that $K_{\mathbb{N}}$ has a restricted $2$-edge-colouring.
Let $F_t^{\ast}, \Omega_t^{\ast}, \Gamma_t^{\ast}, A_t^{\ast}, \varphi_t, W_t^{\ast}, X_t^{\ast}, Y_t^{\ast},Z_t^{\ast}$ be as defined by Algorithm~\ref{algo}.
Suppose $\rho_{t}^B ( A_t^B ) \ge \ell$.
Then there exists $t' > t$ such that $\rho_{t'}^B ( A_t^B ) < \ell$ or $c^B_{t'} \ge (1- 9/\ell)t'$.
\end{lemma}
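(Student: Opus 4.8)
The plan is to argue by contradiction on the contrapositive: suppose that for every $t' > t$ we have both $\rho^B_{t'}(A^B_t) \ge \ell$ and $c^B_{t'} < (1 - 9/\ell)t'$, and derive a contradiction from the second bound. The first assumption means that $A^B_t$ never gets ``emptied'' — the extra available degree on $A^B_t$ stays at least $\ell$ forever — so in particular $\Omega^B_{t''} \subseteq A^B_t$ and $\Omega^B_{t''} \ne \varnothing$ for all $t'' \ge t$ (using Lemma~\ref{lemma:usefulfacts}\ref{itm:u6} with the definition of $\sigma$ and the fact that $\tildadeg^B$ can only drop below $\ell$ once the set is used up). Consequently, by Lemma~\ref{lemma:usefulfacts}\ref{itm:u3} (with the roles of $R$ and $B$ interchanged), we get $X^B_{t''} = X^B_t$ for all $t'' \ge t$; that is, no new type-$X$ blue vertices are ever created after round $t$. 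This is the key structural consequence of the standing assumption.

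Next I would control $|Z^B_{t'}|$. Since $\rho^B_{t'-1}(A^B_t) \ge \ell$ by assumption, Lemma~\ref{lemma:usefulfacts}\ref{itm:u8} (with $R,B$ swapped) gives $|Z^B_{t'}| \le |W^B_t|$... wait — re-examining, the swapped version of \ref{itm:u8} reads: if $\rho^B_{t'-1}(A^B_t) \ge \ell$ then $|Z^R_{t'}| \le |W^B_t|$. That is the wrong colour for what we want. Instead the relevant bound should come from the same circle of ideas applied directly: the type-$Z$ blue vertices $z \in Z^B_{t'}$ are exactly those blue vertices that, at the time $\Omega^B$ was fixed, already had full degree $2$ in the \emph{red} path-forest, and by Lemma~\ref{lemma:usefulfacts}\ref{itm:u5} their red-neighbours lie in $W^R_{t}$-type sets; counting edges of the bipartite red path-forest between $Z^B_{t'}$ and that set bounds $|Z^B_{t'}|$ linearly. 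Either way, the point is to show $|Z^B_{t'}| + |X^B_{t'}|$ grows sublinearly in $t'$ — indeed $|X^B_{t'}| = |X^B_t|$ is constant, and $|Z^B_{t'}|$ is bounded by a quantity that does not grow with $t'$ once we are past round $t$.

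Then I would feed this into Lemma~\ref{lemma:usefulfacts}\ref{itm:u2} (swapped): $c^B_{t'} \ge (1 - 8/\ell)(t' - |Z^R_{t'}| - |X^R_{t'}|) - \ell/2$. Hmm — again the colours: the swapped \ref{itm:u2} is $c^B_{t'} \ge (1 - 8/\ell)(t' - |Z^R_{t'}| - |X^R_{t'}|) - \ell/2$, which involves $Z^R, X^R$, not the $B$ ones. So the bound I actually want must use the \emph{unswapped} \ref{itm:u2}-style reasoning but tracking how many blue vertices fail to enter $F^B$: the blue path-forest $F^B_{t'}$ misses only type-$X$ and type-$Z$ blue vertices plus $O(\ell)$ slack from incomplete $\Gamma$-batches. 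So $c^B_{t'} \ge (1 - 8/\ell)(t' - |X^B_{t'}| - |Z^B_{t'}|) - \ell/2 \ge (1-8/\ell)t' - |X^B_t| - |Z^B_{t'}| - \ell/2$. Since $|X^B_t|$ is a fixed constant and $|Z^B_{t'}|$ is bounded independently of $t'$ (or at worst by $|W^R_{t'}|$, which I then have to control — but $W^R$ vertices are themselves covered, contributing positively), for $t'$ large enough the right-hand side exceeds $(1 - 9/\ell)t'$, contradicting the standing assumption.

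The main obstacle I expect is the bookkeeping around $Z^B_{t'}$: unlike $X^B$, the set of type-$Z$ blue vertices can keep growing as $t'$ increases, so I cannot simply call it constant. The clean way is to observe that each $z \in Z^B_{t'}$ has $d_{F^B}(z) = 2$ with both neighbours in a fixed finite red set (the $\Omega^R$ / $W^R_{t_\Omega}$ active at the time), so a two-way edge count in the bipartite red path-forest bounds $|Z^B_{t'}|$ by a constant depending only on data available at round $t$; alternatively, and more robustly, one notes that every vertex of $W^R_{t'}$ is covered by $F^B_{t'}$ so the ``$-|Z^B_{t'}|$'' loss is compensated by a ``$+|W^R_{t'}|$'' gain of the same order, which is precisely the slack between the constants $8/\ell$ and $9/\ell$ in the statement. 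Pinning down exactly which earlier item does this cleanly (it should be essentially \ref{itm:u8} read with the correct colours, combined with \ref{itm:u5} and \ref{itm:u0}) is the delicate point; everything else is linear arithmetic in $1/\ell$.
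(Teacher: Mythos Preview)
Your colour bookkeeping is inverted, and this is a genuine gap rather than a cosmetic slip. The blue path-forest $F^B_{t'}$ automatically contains \emph{every} blue vertex in $[t']$ (Step~1 of the algorithm, with colours swapped); the vertices that $F^B_{t'}$ may miss are the \emph{red} vertices of types $X$ and $Z$. Hence $c^B_{t'}$ is controlled by $|X^R_{t'}|$ and $|Z^R_{t'}|$, not by $|X^B_{t'}|$ and $|Z^B_{t'}|$. The ``swapped'' form of \ref{itm:u2} that you wrote down and then discarded,
\[
c^B_{t'} \ge (1 - 8/\ell)\bigl(t' - |Z^R_{t'}| - |X^R_{t'}|\bigr) - \ell/2,
\]
is exactly the inequality you need; and the swapped \ref{itm:u8}, namely $|Z^R_{t'}| \le |W^B_t|$ whenever $\rho^B_{t'-1}(A^B_t) \ge \ell$, is likewise the right bound, not the wrong one. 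Finally, your application of \ref{itm:u3} is also miscoloured: from $\Omega^B_{t''} \ne \varnothing$ one concludes $X^R_{t'} = X^R_t$ directly (no swap), not $X^B_{t'} = X^B_t$.

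Once the colours are straightened out, the paper's argument is immediate: from $|X^R_{t'}| = |X^R_t| \le t$ and $|Z^R_{t'}| \le |W^B_t| \le t$ one gets $c^B_{t'} \ge (1 - 8/\ell)t' - (t + \ell/2)$, and choosing the explicit value $t' = \ell(t + \ell/2)$ makes the right-hand side at least $(1 - 9/\ell)t'$. Your attempt to instead bound $|X^B_{t'}|$ and $|Z^B_{t'}|$, and the invented inequality $c^B_{t'} \ge (1 - 8/\ell)(t' - |X^B_{t'}| - |Z^B_{t'}|) - \ell/2$, are not supported by the lemmas and would not yield the conclusion even if they were.
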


\begin{proof}
Suppose that $\rho_{t'}^B ( A_t^B ) \ge \ell$ for all $t' > t$ (or else we are done).
By Lemma~\ref{lemma:usefulfacts}\ref{itm:u6}, $\Omega^B_{t'} \neq \varnothing$ for all $t' > t$.
Hence $X_{t'}^R = X_t^R$ for all $t' >t $ by Lemma~\ref{lemma:usefulfacts}\ref{itm:u3}.
%Let $t_{\max} \dfn \max\{ \varphi_{t}(v) \colon v \in A_t^B \}$.
%Note that $t_{\max} \ge \max\{ \varphi_{t'}(v) \colon v \in \Omega^B_{t'}, t'  \ge t\}$ as $\Omega^B_{t'} \subseteq A_t^B$ for all $t' \ge t$.
Moreover, Lemma~\ref{lemma:usefulfacts}\ref{itm:u8} implies that $|Z^R_{t'}| \le |W^B_t|$ for all $t' \ge t$.
Let $t' = \ell ( t + \ell / 2 )$.
Lemma~\ref{lemma:usefulfacts}\ref{itm:u2} implies that 
\begin{align*}
	c^B_{t'}
	& \ge (1 - 8 /\ell ) t' - |Z^R_{t'}| - |X_{t'}^R|  - \ell/2	\\
	& \ge (1 - 8 /\ell ) t' - |W^B_{t}| - |X_{t}^R|  - \ell/2\\
	& \ge (1 - 8 /\ell ) t' - ( t  + \ell/2) 
	\ge ( 1- 9 /\ell ) t'. \qedhere
\end{align*}
\end{proof}

\begin{lemma} \label{lemma:nomoreavailable}
Let $\ell \in \NN$ be even and $1/t_{0}\ll 1 / \ell \ll \eps \leq 1/2$.
Suppose that $K_{\mathbb{N}}$ has a restricted $2$-edge-colouring.
Let $F_t^{\ast}$, $\Omega_t^{\ast}$, $\Gamma_t^{\ast}$, $A_t^{\ast}$, $\varphi_t$, $W_t^{\ast}$, $X_t^{\ast}$, $Y_t^{\ast}$, $Z_t^{\ast}$ be as defined by Algorithm~\ref{algo}.
Suppose that $ \rho_{t_0}^B(A_{t_0}^B) \ge \ell$.
Then there exists $t' > t_0$ such that $ \rho_{t'}^B(A_{t'}^B)  < \ell$ or $\max\{ c^R_{t'}, c^B_{t'}\} \ge (2 \sqrt 2 - 2 - \eps) t'$.
\end{lemma}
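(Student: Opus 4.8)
The plan is to argue by contradiction. Suppose that for every $t'>t_0$ we have both $\rho^B_{t'}(A^B_{t'})\ge\ell$ and $\max\{c^R_{t'},c^B_{t'}\}<(2\sqrt2-2-\eps)t'$. Write $\alpha\dfn 2\sqrt2-2-\eps$ and $\beta\dfn 1-(2\sqrt2-2)=3-2\sqrt2$, so that $4\beta=(2\sqrt2-2)^2$. The standing hypothesis $\rho^B_{t'}(A^B_{t'})\ge\ell$ together with Lemma~\ref{lemma:usefulfacts}\ref{itm:u6} gives $\Omega^B_{t'}\neq\varnothing$ for all $t'\ge t_0$, and then Lemma~\ref{lemma:usefulfacts}\ref{itm:u3} freezes $X^R$, i.e.\ $X^R_{t'}=X^R_{t_0}$ for all $t'\ge t_0$; set $C_0\dfn|X^R_{t_0}|$.

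First I would iterate Lemma~\ref{lemma:emptytheavailable}: since $\rho^B_{t_0}(A^B_{t_0})\ge\ell$, choose $t_1>t_0$ minimal with $\rho^B_{t_1}(A^B_{t_0})<\ell$ — the alternative outcome $c^B_{t_1}\ge(1-9/\ell)t_1$ of that lemma is impossible, because for $\ell$ large $1-9/\ell>2\sqrt2-2>\alpha$ contradicts the standing assumption. As $\rho^B_{t_1}(A^B_{t_1})\ge\ell$ we repeat, producing $t_0<t_1<t_2<\cdots$ with $\rho^B_{t'}(A^B_{t_i})\ge\ell$ for $t_i\le t'<t_{i+1}$ and $\rho^B_{t_{i+1}}(A^B_{t_i})<\ell$.

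The heart of the argument is to extract, for each $i\ge1$, two linear inequalities in $t_{i-1},t_i,t_{i+1}$. A \emph{forward} inequality bounding $|W^B_{t_i}|$ below: on $[t_i,t_{i+1}]$ we have $\rho^B_{t_{i+1}-1}(A^B_{t_i})\ge\ell$, so Lemma~\ref{lemma:usefulfacts}\ref{itm:u8} gives $|Z^R_{t_{i+1}}|\le|W^B_{t_i}|$, and then Lemma~\ref{lemma:usefulfacts}\ref{itm:u2} applied to $c^B$ together with $c^B_{t_{i+1}}<\alpha t_{i+1}$ yields
\[
|W^B_{t_i}|\ge\Bigl(1-\tfrac{\alpha}{1-8/\ell}\Bigr)t_{i+1}-C_0-\tfrac{\ell/2}{1-8/\ell}\ge(\beta+\eps/2)\,t_{i+1}-C_1,
\]
where $1/\ell\ll\eps$ absorbs the $O(1/\ell)$-error and $C_1=C_1(\ell,C_0)$. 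A \emph{backward} inequality bounding $|W^B_{t_i}|$ above: every vertex of $W^B_{t_i}$ lies in $F^R_{t_i}$ with degree $2$, so $c^R_{t_i}\ge|R\cap[t_i]|+|W^B_{t_i}|$ and hence $|W^B_{t_i}|<\alpha t_i-|R\cap[t_i]|$; it remains to bound $|R\cap[t_i]|$ below in terms of $t_{i-1}$. For this, minimality of $t_i$ gives $\rho^B_{t_i}(A^B_{t_{i-1}})<\ell$, and since $\varphi$ is never reassigned and every red vertex has $F^R$-degree at most $2$, the set $A^B_{t_i}\setminus A^B_{t_{i-1}}$ has size $O(t_i-t_{i-1})$; thus $\rho^B_{t_i}(A^B_{t_i})\le\ell+O(t_i-t_{i-1})$. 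On the other hand, Lemma~\ref{lemma:rho}\ref{itm:rho3} together with $\rho^R_{t_i}(A^R_{t_i})\le2|R\cap[t_i]|$ and $c^R_{t_i},c^B_{t_i}<\alpha t_i$ forces $\rho^B_{t_i}(A^B_{t_i})\ge4(1-\alpha)t_i-2|R\cap[t_i]|-O(\ell)-O(t_i/\ell)$. Comparing the two bounds on $\rho^B_{t_i}(A^B_{t_i})$ yields a lower bound for $|R\cap[t_i]|$ that is affine in $t_i$ and $t_{i-1}$, and substituting it into $|W^B_{t_i}|<\alpha t_i-|R\cap[t_i]|$ produces $|W^B_{t_i}|\le a\,t_i-b\,t_{i-1}+C_2$ with $a,b$ explicit in terms of $\alpha$ (and $\ell$).

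Combining the forward and backward inequalities produces a difference inequality $t_{i+1}\le\tau_1 t_i-\tau_2 t_{i-1}+c_0$ with $\tau_1=a/(\beta+\eps/2)$ and $\tau_2=b/(\beta+\eps/2)$, where the constants are arranged (this is exactly what pins down the value $2\sqrt2-2$) so that $\tau_1^2<4\tau_2$ whenever $\alpha<2\sqrt2-2$ — which holds by our choice of $\alpha$, since $1/\ell\ll\eps$ keeps the $O(1/\ell)$-corrections within the $\eps$-margin. As $(t_i)_{i\ge0}$ is a strictly increasing sequence of non-negative integers, Lemma~\ref{lemma:differenceinequality} applies and forces $\tau_1^2\ge4\tau_2$, a contradiction. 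Hence one of the two alternatives of the statement must hold at some $t'>t_0$. The main obstacle is the backward inequality: carrying out the bookkeeping that squeezes $\rho^B_{t_i}(A^B_{t_i})$ between $\ell+O(t_i-t_{i-1})$ above and $4(1-\alpha)t_i-2|R\cap[t_i]|-O(\ell+t_i/\ell)$ below, precisely enough that the resulting discriminant $\tau_1^2-4\tau_2$ changes sign exactly at $2\sqrt2-2$; the remainder is routine assembly of Lemmas~\ref{lemma:emptytheavailable}, \ref{lemma:usefulfacts}, \ref{lemma:rho} and \ref{lemma:differenceinequality} while tracking the $O(1/\ell)$ and $O(\ell)$ errors.
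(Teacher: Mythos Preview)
Your overall strategy --- assume the conclusion fails, build an increasing sequence $(t_i)$, derive a second-order difference inequality, and invoke Lemma~\ref{lemma:differenceinequality} --- is indeed the one the paper uses. However, the specific inequalities you extract are too weak to pin down the threshold $2\sqrt2-2$, and the claim that ``the constants are arranged \dots\ so that $\tau_1^2<4\tau_2$'' is false with the bounds you actually wrote down.

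Concretely, your backward inequality goes through $\rho^R_{t_i}(A^R_{t_i})\le 2|R\cap[t_i]|$, which is the trivial estimate and discards all structural information. Carrying your argument through with the honest constant (one has $|A^B_{t_i}\setminus A^B_{t_{i-1}}|\le 2(t_i-t_{i-1})$), one gets $r_i\ge -2\alpha t_i+2t_{i-1}$ and hence $|W^B_{t_i}|\le 3\alpha t_i-2t_{i-1}+O(1)$. Combined with your forward bound $|W^B_{t_i}|\ge(1-\alpha)t_{i+1}-O(1)$, this yields $\tau_1=\tfrac{3\alpha}{1-\alpha}$ and $\tau_2=\tfrac{2}{1-\alpha}$, so $\tau_1^2-4\tau_2$ has the sign of $9\alpha^2+8\alpha-8$. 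This vanishes at $\alpha\approx 0.598$, not at $2\sqrt2-2\approx 0.828$; at the latter value $\tau_1^2-4\tau_2>0$ and Lemma~\ref{lemma:differenceinequality} gives no contradiction. (Worse, the lower bound on $r_i$ is frequently negative, so the backward inequality degenerates to $|W^B_{t_i}|\le\alpha t_i$, which only gives a one-step ratio bound.)

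The paper's argument differs in two essential ways. First, it is two-sided: at each stage it empties \emph{both} $A^R_{t_i}$ and $A^B_{t_i}$, setting $t_{i+1}=\max\{t^R_{i+1},t^B_{i+1}\}$. Lemma~\ref{lemma:rho}\ref{itm:rho3} then provides the combined lower bound $\rho^R_{t_{i-1}}(A^R_{t_{i-1}})+\rho^B_{t_{i-1}}(A^B_{t_{i-1}})\ge 4(\alpha+\eps/2)t_{i-1}$ (here $\alpha=3-2\sqrt2$), and emptying both sides via Lemma~\ref{lemma:usefulfacts}\ref{itm:u7} produces $|Y^R_{t_i}\cup Y^B_{t_i}|\ge 2(\alpha+\eps/2)\sum_{j<i}t_j$. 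Second, the difference inequality is in the partial sums $T_i=\sum_{j\le i}t_j$, not in $t_i$: combining the $Y$-bound with a lower bound on $|W^R_{t_i}\cup W^B_{t_i}\cup X^B_{t_i}\cup Z^B_{t_i}|$ (proved as a separate claim using Lemma~\ref{lemma:usefulfacts}\ref{itm:u8} on both colours) and the partition $t_i=\sum_{\ast}|W^\ast\cup X^\ast\cup Y^\ast\cup Z^\ast|$ gives
\[
(\alpha+\eps/4)T_{i+1}-(1-\alpha)T_i+T_{i-1}\le c_0,
\]
whose discriminant $(1-\alpha)^2-4\alpha$ vanishes exactly at $\alpha=3-2\sqrt2$. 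Your one-sided route through $|R\cap[t_i]|$ cannot recover these constants; the missing idea is the symmetric use of both colours and the passage to cumulative sums.
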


\begin{proof}
Let $\alpha \dfn 3 - 2 \sqrt{2}$. % and note that $1 - \alpha = 2 \sqrt{2} - 2$.
Suppose the contrary, that is, for all $t > t_0$ we have
\begin{align}
	\rho_{t}^B(A_{t}^B) \ge \ell \text{ and } c^R_{t}, c^B_{t} \leq (1 - \alpha - \eps) t.
	\label{eqn:av1}
\end{align}
Note that Lemma~\ref{lemma:usefulfacts}\ref{itm:u3} and~\ref{itm:u6} imply that 
\begin{align}
	X^{R}_t = X^{R}_{t_0} 
	\label{eqn:Xi}
\end{align}
for all $t \ge t_0$.

Given~$t_i$, define $t^R_{i+1}$ to be the minimum $t > t_i$ such that $\rho_t^R(A_{t_i}^R) < \ell$, which exists by Lemma~\ref{lemma:emptytheavailable} and $1/\ell \ll \eps \leq 1/2$.
Analogously, define~$t^B_{i+1}$.
Define $t_{i+1} \dfn \max \{ t^R_{i+1}, t^B_{i+1} \}$ and $t'_{i+1} \dfn \min \{ t^R_{i+1}, t^B_{i+1} \}$.
This defines sequences $t_i,t_i'$ such that, for all $i \ge 1$,
\begin{align*}
		t_{i-1} & < t'_i \le t_i, \\
		\min \{ \rho_{t'_{i}}^R ( A_{t_{i-1}}^R ), \rho_{t'_{i}}^B ( A_{t_{i-1}}^B) \}, & \rho_{t_{i}}^R ( A_{t_{i-1}}^R ), \rho_{t_{i}}^B ( A_{t_{i-1}}^B) < \ell.
\end{align*}
For convenience, let $t_{-1} \dfn 0$ and for every $i \ge 0$, let $I_i \dfn \{ t_{i-1} + 1, \dotsc, t_i \}$.
For every $i \ge 0$ and $\ast \in \{R, B\}$, let
\begin{align*}
	x_i^{\ast} & \dfn |I_i \cap X_{t_i}^{\ast}| \text{ and }
	z_i^{\ast} \dfn |I_i \cap Z_{t_i}^{\ast}|.
\end{align*}
Lemma~\ref{lemma:usefulfacts}\ref{itm:u2} and \eqref{eqn:av1} imply that
\begin{align*}
	(1 - \alpha - \eps) t_i
	& \ge c^R_{t_i} \ge (1 - 8 /\ell ) t_i - |Z^B_{t_i}| - |X_{t_i}^B|  - \ell/2 \\
	& %\overset{\mathclap{\text{\eqref{eqn:Xi}}}}\ge
	\ge (1 - 8 /\ell ) t_i - \sum_{j \in [i]_0}(x_j^B+z_j^B) - \ell/2,
\end{align*}
and a similar inequality also holds for $\sum_{j \in [i]_0}(x_j^R+z_j^R)$.
In summary, we have for $\ast \in \{R,B\}$,
\begin{align}	
	\sum_{j \in [i]_0} (x_j^*+z_j^*) & \ge  (\alpha +\eps/2)  t_i. \label{eqn:sumq}
\end{align}

Consider any $i \ge 1$. 
Write $T_i \dfn \sum_{j \in [i]_0} t_{j}$.
Lemma~\ref{lemma:usefulfacts}\ref{itm:u7} implies that 
\begin{align*}
	|Y^B_{t_i} \setminus Y^B_{t_{i-1}}| & \ge |Y^B_{t^R_{i}} \setminus Y^B_{t_{i-1}}|
	\ge \frac12 ( \rho_{t_{i-1}}^R (A^R_{t_{i-1}}) - \rho_{t^R_{i}}^R (A^R_{t_{i-1}}) )
	\ge \frac12 ( \rho_{t_{i-1}}^R (A^R_{t_{i-1}}) - \ell) 
\end{align*}
and a similar inequality holds for $|Y^R_{t_i} \setminus Y^R_{t_{i-1}}|$.
Hence by combining both inequalities and using Lemma~\ref{lemma:rho}\ref{itm:rho3}, we have 
\begin{align*}
	|Y^B_{t_i} \setminus Y^B_{t_{i-1}}| + |Y^R_{t_i} \setminus Y^R_{t_{i-1}}| & \ge \frac12 ( \rho_{t_{i-1}}^R (A^R_{t_{i-1}}) +  \rho_{t_{i-1}}^B (A^B_{t_{i-1}}))- \ell \\
	& \ge 2( 1- 8 /\ell)t_{i-1} - 2 \ell - c^R_{t_{i-1}} - c^B_{t_{i-1}} \\
	& \overset{\mathclap{\text{\eqref{eqn:av1}}}}\ge 2(\alpha + \eps) t_{i-1} - 2 \ell - 16 t_{i-1} / \ell \\
	& \ge 2 (\alpha +\eps/2)t_{i-1},
\end{align*}
where the last inequality follows from $1/{t_{i-1}} \leq 1/t_0 \ll 1/\ell \ll \eps$.
Hence, for all $i \ge 0$,
\begin{align}
	|Y^B_{t_i} \cup Y^R_{t_i} | \ge 2 (\alpha +\eps/2) T_{i-1}. \label{eqn:YB+YR}
\end{align}

\begin{claim} \label{claim:termsaiai1}
For all $i \in \mathbb{N}$, $|W^R_{t_i} \cup W^B_{t_i} \cup X^B_{t_i} \cup Z^B_{t_i} | \ge (\alpha +\eps/2)(t_i + t_{i+1}) -t_0$.
\end{claim}

\begin{proofclaim}
We divide the proof into two cases.
First suppose that $t^{B}_{i+1} \ge t^{R}_{i+1}$.
Since $\rho_{t_{i}-1}^B ( A_{t_{i-1}}^B ) \ge \ell$, Lemma~\ref{lemma:usefulfacts}\ref{itm:u8} implies that $ |W_{t_i}^B| \ge |Z_{t_{i+1}}^R| = \sum_{j \in [i+1]_0} z^R_j$.
Hence 
\begin{align*}
	& |W^R_{t_i} \cup W^B_{t_i} \cup X^B_{t_i} \cup Z^B_{t_i} | \\
	& \ge | W^B_{t_i}| + |X^B_{t_i} \cup Z^B_{t_i} |
	\ge \sum_{j \in [i+1]_0 } z^R_j + \sum_{j\in [i]_0} (x^B_j + z^B_j) \\
	& \overset{\mathclap{\text{\eqref{eqn:Xi}}}}{=} \sum_{j \in [i+1]_0 } (x^R_j + z^R_j) - x^R_0 + \sum_{j\in [i]_0} (x^B_j + z^B_j)\\
	& \overset{\mathclap{\text{\eqref{eqn:sumq}}}}{\ge}  (\alpha +\eps/2)(t_i + t_{i+1}) -t_0,
\end{align*}
so the claim holds in this case.
		
Now, suppose that $t^{B}_{i+1} < t^{R}_{i+1}$.
By the choice of $t^R_{i+1}$, Lemma~\ref{lemma:usefulfacts}\ref{itm:u8} implies that $ |W_{t_i}^R| \ge |Z_{t_{i+1}}^B| = \sum_{j \in [i+1]_0} z^B_j$.
By a similar argument, $ |W_{t_i}^B| \ge \sum_{j \in [i]_0} z^R_j$.
Lemma~\ref{lemma:usefulfacts}\ref{itm:u3} and~\ref{itm:u6} imply that $X^B_{t_{i+1}} = X^{B}_{t_i}$ and so $x_{i+1}^B = 0$.
Hence 
\begin{align*}
	& |W^R_{t_i} \cup W^B_{t_i} \cup X^B_{t_i} \cup Z^B_{t_i} |
	 \ge |W^R_{t_i}| + |W^B_{t_i}| + |X^B_{t_i}|\\
		& \ge \sum_{j \in [i+1]_0 } z^B_j +  \sum_{j \in [i]_0} z^R_j + \sum_{j \in [i]_0} x^B_j 
		 \overset{\mathclap{\text{\eqref{eqn:Xi}}}}{=} \sum_{j \in [i+1]_0 } (x^B_j + z^B_j)  + \sum_{j\in [i]_0} (x^R_j + z^R_j) - x^R_0\\
		& \overset{\mathclap{\text{\eqref{eqn:sumq}}}}{\ge}  (\alpha +\eps/2)(t_i + t_{i+1}) -t_0. \qedhere
\end{align*}
\end{proofclaim}

Together with Lemma~\ref{lemma:usefulfacts}\ref{itm:u0} and~\eqref{eqn:YB+YR}, we have 
\begin{align*}
	t_i - |Z^R_{t_i}| - |X_{t_i}^R| &= |Y^B_{t_i} \cup Y^R_{t_i} |+ |W^R_{t_i} \cup W^B_{t_i} \cup X^B_{t_i} \cup Z^B_{t_i} |\\
	& \ge  (\alpha +\eps/2) (T_{i-1} + T_{i+1}) -t_0.
\end{align*}
Hence, \eqref{eqn:av1} and Lemma~\ref{lemma:usefulfacts}\ref{itm:u2} imply that 
\begin{align*}
	(1- \alpha ) (T_{i} - T_{i-1}) & = (1- \alpha ) t_i 
	\ge  c^B_{t_i} 
	\\
	& \ge 	(1 - 8 /\ell ) ( t_i - |Z^R_{t_i}| - |X_{t_i}^R| ) - \ell/2\\
& \ge  (\alpha + \eps/4)(T_{i-1} + T_{i+1}) -t_0 - \ell/2,\\
	0 & \ge (\alpha +\eps/4)  T_{i+1} - (1 -  \alpha) T_{i} + T_{i-1} - t_0 - \ell/2.
\end{align*}
Therefore, Lemma~\ref{lemma:differenceinequality} (and our choice of $\alpha$) implies 
\begin{align*}
	0 \le (1 -  \alpha)^2 - 4 (\alpha +\eps/4)  < 1 - 6 \alpha + \alpha^2 = 0 ,
\end{align*}
a contradiction.
\end{proof}

Now we are ready to prove Lemma~\ref{lemma:nopurplevertices}.

\begin{proof}[Proof of Lemma~\ref{lemma:nopurplevertices}]
	Let $\alpha \dfn (7 - \sqrt{17})/16$. %, so $(9 + \sqrt{17})/16 = 1 - \alpha$.
	Choose $\ell, k'_0 \in \NN$ such that $\ell$ is even, $k'_0 \ge k_0$ and
	\begin{align}
	0 < 1/k'_0 \ll 1/\ell \ll \eps, \alpha. \label{eq:truehierarchy}
	\end{align}
	Let $F^\ast_t$, $\Omega^\ast_t$, $\Gamma^\ast_t$, $A_t^{\ast}$, $\varphi_t$, $W^\ast_t$, $X^\ast_t$, $Y^\ast_t$, $Z^\ast_t$ be as defined by Algorithm~\ref{algo}.
	Lemma~\ref{lemma:usefulfacts}\ref{itm:u0} implies that for all $t \in \mathbb{N}$,
	\begin{align}
		t & = \sum_{\ast \in \{ R, B \}} |W^{\ast}_t|+|X^{\ast}_t|+|Y^{\ast}_t|+|Z^{\ast}_t|. \label{eq:totalattimet}
	\end{align}
	Lemma~\ref{lemma:usefulfacts}\ref{itm:u2} together with \eqref{eq:totalattimet} imply that for all $t \in \mathbb{N}$ (and a similar bound is true replacing $R$ by $B$):
	\begin{align}
		c^R_t & \ge (1 - 8/ \ell) (|W^R_t \cup Y^R_t| + |W^B_t \cup Y^B_t| + |X^R_t \cup Z^R_t|) - \ell/2. \label{eq:lowerboundcovered}
	\end{align}
	
	We might suppose that for all $t \ge k_0$ we have 
	\begin{align}
	c^R_t, c^B_t \leq (1 - \alpha - \eps)t, \label{eq:truenodensepathforest}
	\end{align} or else we are done.
	Together with Lemma~\ref{lemma:rho}\ref{itm:rho3} and  \eqref{eq:truehierarchy}, \begin{align}
		\rho^R_{t}(A^R_{t}) \ge \ell \text{ or } \rho^B_{t}(A^B_{t}) \ge \ell \qquad \forall t \ge k_0. \label{eq:alwaysavailable}
	\end{align}
	
	Without loss of generality, we may assume that $\rho^B_{k'_0}(A^B_{k'_0}) \ge \ell$.
	Define~$t_0$ to be the minimum $t > k'_0$ such that $\rho^B_t(A^B_t) < \ell$, which exists by Lemma~\ref{lemma:nomoreavailable} and \eqref{eq:truenodensepathforest}. % (since $\alpha > 3 - 2 \sqrt{2}$).
	Note that $\rho^R_{t_0}(A^R_{t_0}) \ge \ell$ by \eqref{eq:alwaysavailable}.
	Similarly, define $t_{1}$ to be the minimum $t > t_{0}$ such that $\rho^{R}_t(A^R_t) < \ell$.
	Now define $t_2$ to be the minimum $t > t_1$ such that $\rho^B_{t}(A^B_{t_1}) < \ell$.
	Note that~$t_2$ exists by Lemma~\ref{lemma:emptytheavailable} and \eqref{eq:truenodensepathforest}, and that $t_0 < t_1 < t_2$.
	
	Lemma~\ref{lemma:usefulfacts}\ref{itm:u2} and \eqref{eq:truenodensepathforest} imply that for all $\ast \in \{R, B\}$ and $i \in [2]$, \begin{align} 	|X^\ast_{t_i} \cup Z^\ast_{t_i} | & \ge (\alpha + \eps/2) t_i. \label{eq:unusedinti}
	\end{align}

	\begin{claim} \label{claim:constructionofhi}
		There exist 
		\begin{align}
			H^R \subseteq Y^R_{t_1} \cup W^R_{t_{1}} \text{ and } H^B \subseteq Y^B_{t_{1}} \cup W^B_{t_{1}} \label{eq:HsubsetofWY}
		\end{align}
		 such that \begin{align*}
		|H^R| & = |X^B_{t_{1}} \cup Z^B_{t_{1}}| - \ell, \\
		|H^B| & = |X^B_{t_{1}} \cup Z^B_{t_{1}}| + |X^R_{t_{2}} \cup Z^R_{t_{2}}| - \ell.
		\end{align*}
	\end{claim}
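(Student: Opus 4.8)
The plan is to reduce the claim to two cardinality inequalities and then extract them from the bookkeeping in Lemmas~\ref{lemma:rho} and~\ref{lemma:usefulfacts}. Since $H^R$ and $H^B$ only have to be \emph{subsets} of $Y^R_{t_1}\cup W^R_{t_1}$ and of $Y^B_{t_1}\cup W^B_{t_1}$ of the prescribed sizes — both non-negative, by~\eqref{eq:unusedinti} and~\eqref{eq:truehierarchy} — it suffices to show
\[
  |Y^R_{t_1}\cup W^R_{t_1}|\ \ge\ |X^B_{t_1}\cup Z^B_{t_1}|-\ell,\qquad
  |Y^B_{t_1}\cup W^B_{t_1}|\ \ge\ |X^B_{t_1}\cup Z^B_{t_1}|+|X^R_{t_2}\cup Z^R_{t_2}|-\ell ,
\]
and then take $H^R,H^B$ to be arbitrary subsets of the right sizes. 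I would work with the sets $D^\ast_{t_1}$ from Lemma~\ref{lemma:rho}, which satisfy $Y^\ast_{t_1}\subseteq D^\ast_{t_1}\subseteq W^\ast_{t_1}\cup Y^\ast_{t_1}$, so it is enough to prove the same inequalities with $|Y^R_{t_1}\cup W^R_{t_1}|$ and $|Y^B_{t_1}\cup W^B_{t_1}|$ replaced by $|D^R_{t_1}|$ and $|D^B_{t_1}|$. The two facts about the epochs I would use repeatedly are $\rho^R_{t_1}(A^R_{t_1})<\ell$ (definition of $t_1$) and $\rho^B_{t'}(A^B_{t_1})\ge\ell$ for all $t_1\le t'<t_2$ (definition of $t_2$); the latter forces $\rho^B_{t'}(A^B_{t'})\ge\ell$ and hence $\Omega^B_{t'}\neq\varnothing$ there, by Lemma~\ref{lemma:usefulfacts}\ref{itm:u6}.

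The heart of the argument is the lower bound on $|D^R_{t_1}|$. Starting from Lemma~\ref{lemma:rho}\ref{itm:rho2} with the colours interchanged, taken at $t=t_1$, one gets $2|D^R_{t_1}|\ge 2|D^B_{t_1}|+2|X^B_{t_1}\cup Z^B_{t_1}|-\rho^B_{t_1}(A^B_{t_1})$. The point is that $\rho^B_{t_1}(A^B_{t_1})$ exceeds $2|D^B_{t_1}|$ by less than $\ell$: we have $A^B_{t_1}=D^B_{t_1}\sqcup X^B_{t_1}\sqcup Z^B_{t_1}$, every vertex of $Z^B_{t_1}$ has blue-path-forest degree $2$ straight from the definition of type $Z$ (and degrees never decrease along the algorithm), so $\rho^B_{t_1}(A^B_{t_1})=\rho^B_{t_1}(D^B_{t_1})+\rho^B_{t_1}(X^B_{t_1})\le 2|D^B_{t_1}|+\rho^B_{t_1}(X^B_{t_1})$, and the subclaim is that $\rho^B_{t_1}(X^B_{t_1})<\ell$. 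For the subclaim: no vertex of $\{t_0,\dots,t_1-1\}$ is of type $X$ in the blue case, because $\Omega^R_{t'}\neq\varnothing$ there (Lemma~\ref{lemma:usefulfacts}\ref{itm:u6}, since $\rho^R_{t'}(A^R_{t'})\ge\ell$ for $t_0\le t'<t_1$ by~\eqref{eq:alwaysavailable} and minimality of $t_1$), so by Lemma~\ref{lemma:usefulfacts}\ref{itm:u3} one has $X^B_{t_1}=X^B_{t_0}$ up to at most one vertex; and $X^B_{t_0}\subseteq A^B_{t_0}$ gives $\rho^B_{t_1}(X^B_{t_0})\le\rho^B_{t_0}(X^B_{t_0})\le\rho^B_{t_0}(A^B_{t_0})<\ell$. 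Plugging in yields $|D^R_{t_1}|>|X^B_{t_1}\cup Z^B_{t_1}|-\ell$, with a little room to spare.

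For $|D^B_{t_1}|$ I would use that $\Omega^B_{t'}\neq\varnothing$ for $t_1\le t'<t_2$, so Lemma~\ref{lemma:usefulfacts}\ref{itm:u3} and~\ref{itm:u6} give $X^R_{t_2}=X^R_{t_1}$ (again up to at most one vertex) and put us in position to apply Lemma~\ref{lemma:rho}\ref{itm:rho2strong} at $t=t_1$ with $t'=t_2$; in fact I would use the slightly sharper form that its proof delivers, $2|D^B_{t_1}|\ge 2|D^R_{t_1}|+2|X^R_{t_2}\cup Z^R_{t_2}|-\rho^R_{t_1}(A^R_{t_1})$, obtained by counting the edges of $F^R_{t_1}[\,[t_1]\cup X^R_{t_2}\cup Z^R_{t_2}]$ in two ways, each $u\in Z^R_{t_2}$ contributing $d_{F^R_{t_1}}(u)=2$ with neighbourhood in $W^B_{t_1}$. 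Combining this with $\rho^R_{t_1}(A^R_{t_1})<\ell$ and the bound on $|D^R_{t_1}|$ from the previous paragraph gives $|D^B_{t_1}|>|X^B_{t_1}\cup Z^B_{t_1}|+|X^R_{t_2}\cup Z^R_{t_2}|-\ell$, which is the second inequality. Both displayed bounds are then in hand (the tiny losses from the ``up to one vertex'' caveats being absorbed in the $\ell$-slack via $1/\ell\ll\eps,\alpha$), and the claim follows by choosing $H^R$ inside $D^R_{t_1}$ and $H^B$ inside $D^B_{t_1}$ of the required sizes.

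The step I expect to be the real obstacle is the subclaim $\rho^B_{t_1}(X^B_{t_1})<\ell$: this is where one genuinely has to understand how the algorithm schedules the wasted (type~$X$) blue vertices — that those created before $t_1$ are already of full degree in the blue path-forest by time $t_1$ — and it is exactly the content encoded by the identity $X^B_{t_1}=X^B_{t_0}$ together with the fact that the blue available mass was exhausted at $t_0$ ($\rho^B_{t_0}(A^B_{t_0})<\ell$). Everything after that is routine manipulation of the inequalities and of the $\ell$-slack.
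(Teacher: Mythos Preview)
Your proposal is correct and follows essentially the same route as the paper's proof: both arguments work with the sets $D^\ast_{t_1}$ from Lemma~\ref{lemma:rho}, combine Lemma~\ref{lemma:rho}\ref{itm:rho2} (colours swapped) with the bound $\rho^B_{t_1}(A^B_{t_1})\le 2|D^B_{t_1}|+\ell$ to get $|D^R_{t_1}|\ge|X^B_{t_1}\cup Z^B_{t_1}|-\ell$, and then feed this into Lemma~\ref{lemma:rho}\ref{itm:rho2strong} together with $\rho^R_{t_1}(A^R_{t_1})<\ell$ to obtain the bound on $|D^B_{t_1}|$. The only cosmetic difference is that you establish $\rho^B_{t_1}(A^B_{t_1})\le 2|D^B_{t_1}|+\ell$ by decomposing $A^B_{t_1}=D^B_{t_1}\sqcup X^B_{t_1}\sqcup Z^B_{t_1}$ directly (re-deriving~\eqref{eqn:rho1}) and bounding $\rho^B_{t_1}(X^B_{t_1})$, whereas the paper cites Lemma~\ref{lemma:rho}\ref{itm:rho1}; the key input $X^B_{t_1}\setminus X^B_{t_0}=\varnothing$ is the same in both, and your ``up to one vertex'' hedging is unnecessary but harmless.
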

	
	\begin{proofclaim}
		For every $\ast \in \{ R, B \}$, consider $D^\ast_{t_1} \subseteq Y^\ast_{t_1} \cup W^\ast_{t_1}$ as given by Lemma~\ref{lemma:rho}.
		Note that $\rho^B_{t_0}(A^B_{t_0}) \leq \ell$.
		Then Lemma~\ref{lemma:rho}\ref{itm:rho1} implies
		\begin{align*}
		\rho^B_{t_1}(A^B_{t_1}) - \ell
		& \leq \rho^B_{t_1}(A^B_{1}) - \rho^B_{t_0}(A^B_{t_{0}}) 
		 \leq 2 |D^B_{t_1} \setminus D^B_{t_0}| + 2 |X^B_{t_1} \setminus X^B_{t_0}| \\
		& \leq 2 |D^B_{t_1}| + 2 |X^B_{t_1} \setminus X^B_{t_0}|.
		\end{align*}
		By the choice of $t_{0}$ and $t_{1}$, $\rho^{R}_{t'}(A^R_{t'}) \ge \ell$ for all $t_{0} \leq t' < t_1$.
		Therefore, Lemma~\ref{lemma:usefulfacts}\ref{itm:u3} and \ref{itm:u6} imply that $X^B_{t_1} \setminus X^B_{t_{0}} = \varnothing$.
		Hence, \begin{align}
		\rho^B_{t_1}(A^B_{t_1}) \leq 2 |D^B_{t_1}| + \ell. \label{eq:theinequalityaboveforrhoBt1}
		\end{align}
		
		Lemma~\ref{lemma:rho}\ref{itm:rho2} and \eqref{eq:theinequalityaboveforrhoBt1} together imply that
		\begin{align}
		2|D^R_{t_1}|
		& \ge 2 |D^B_{t_1}| + 2 |X^B_{t_1} \cup Z^B_{t_1}| - \rho^{B}_{t_1}(A^B_{t_1}) \ge 2 |X^B_{t_1} \cup Z^B_{t_1}| - \ell. \label{eq:theinequalityforDRtiabove}
		\end{align}
		Recall that $\rho^R_{t_1}(A^R_{t_1}) \leq \ell$ and $\rho^B_{t_2 - 1}(A^B_{t_1}) \ge \ell$.
		By Lemma~\ref{lemma:rho}\ref{itm:rho2strong},
		\begin{align*}
		2|D^B_{t_1}|
		& \ge 2 |D^R_{t_1}| + 2 |X^R_{t_2} \cup Z^R_{t_2}| - \rho^R_{t_1}(A^R_{t_1}) 
		 \ge 2 |D^R_{t_1}| + 2 |X^R_{t_2} \cup Z^R_{t_2}| - \ell \\
		& \stackrel{ \mathclap{\eqref{eq:theinequalityforDRtiabove}} }{\ge} 2 |X^B_{t_1} \cup Z^B_{t_1}| + 2 |X^R_{t_2} \cup Z^R_{t_2}| - 2 \ell.
		\end{align*}
		Thus $|D^B_{t_1}| \ge |X^B_{t_1} \cup Z^B_{t_1}| + |X^R_{t_2} \cup Z^R_{t_2}| - \ell$ and $|D^R_{t_1}| \ge |X^B_{t_1} \cup Z^B_{t_1}| - \ell$, which implies the existence of a set $H^\ast \subseteq D^\ast_{t_1} \subseteq Y^\ast_{t_1} \cup W^\ast_{t_1}$ of the desired size for every $\ast \in \{R, B\}$.	
	\end{proofclaim}
	
	Since $k'_0 \leq t_0 \leq t_1 \leq t_2$, we have $1/t_2, 1/{t_1} \ll 1/\ell \ll \alpha, \eps$.
	Let $H^R$ and $H^B$ be given by Claim~\ref{claim:constructionofhi}.
	Let \begin{align*}
	a & \dfn |X^B_{t_1} \cup Z^B_{t_1}|, & b & \dfn |X^R_{t_1} \cup Z^R_{t_1}|, \\
	c & \dfn |(X^B_{t_2} \cup Z^B_{t_2}) \setminus (X^B_{t_1} \cup Z^B_{t_1})|, & d & \dfn |(X^R_{t_2} \cup Z^R_{t_2}) \setminus (X^R_{t_1} \cup Z^R_{t_1})|.
	\end{align*}	
	Thus, $|H^R| = a - \ell$ and $|H^B| = a + b + d - \ell$.
%	Note that $t_1 \ge 3a + 2b + d - 2 \ell$.
	Let $\delta \dfn \eps/2$ and $\rho \dfn \alpha + \delta$.
	Since $\alpha = (7 - \sqrt{17})/16$ is the least real root of the polynomial $ 8x^2 - 7x + 1$ and $0 < \eps < 1/2$, it follows that $1 \leq 7 \rho -  8\rho^2$.
	
	Now we use the previous bounds to get \begin{align}
	1 - \alpha - \eps & \stackrel{ \mathclap{\eqref{eq:truenodensepathforest}} }{\ge} \; \frac{c^R_{t_1}}{t_1} \;
	\stackrel{ \mathclap{ \eqref{eq:lowerboundcovered}} }{\ge} \; \frac{ (1 - 8/\ell)( |W^R_{t_1} \cup Y^R_{t_1}| + |W^B_{t_1} \cup Y^B_{t_1}| + |X^R_{t_1} \cup Z^R_{t_1}|) - \ell / 2}{ t_1 }
	\nonumber \\
	& \stackrel{ \mathclap{\eqref{eq:totalattimet}} }{\ge} \; \frac{ |W^R_{t_1} \cup Y^R_{t_1}| + |W^B_{t_1} \cup Y^B_{t_1}| + |X^R_{t_1} \cup Z^R_{t_1}| - \ell / 2}{ |W^R_{t_1} \cup Y^R_{t_1}| + |W^B_{t_1} \cup Y^B_{t_1}| + | X^R_{t_1} \cup Z^R_{t_1}| + |X^B_{t_1} \cup Z^B_{t_1}| } - \frac{8}{\ell}
	\nonumber \\
	& \stackrel{ \mathclap{\eqref{eq:HsubsetofWY}} }{\ge} \; \frac{ |H^R| + |H^B| + |X^R_{t_1} \cup Z^R_{t_1}| - \ell / 2 }{ |H^R| + |H^B| + |X^R_{t_1} \cup Z^R_{t_1}| + |X^B_{t_1} \cup Z^B_{t_1}|} - \frac{8}{\ell} 
	\nonumber \\
%	& = \frac{2 |X^B_{t_i} \cup Z^B_{t_i}| + |X^R_{t_i} \cup Z^R_{t_i}| + |X^R_{t'_i} \cup Z^R_{t'_i}| - ( 8 t_i / \ell + 5 \ell / 2 ) }{3 |X^B_{t_i} \cup Z^B_{t_i}| + |X^R_{t_i} \cup Z^R_{t_i}| + |X^R_{t'_i} \cup Z^R_{t'_i}| - 2 \ell} \\
%	& \ge \frac{2 |X^B_{t_i} \cup Z^B_{t_i}| + |X^R_{t_i} \cup Z^R_{t_i}| + |X^R_{t'_i} \cup Z^R_{t'_i}|}{3 |X^B_{t_i} \cup Z^B_{t_i}| + |X^R_{t_i} \cup Z^R_{t_i}| + |X^R_{t'_i} \cup Z^R_{t'_i}|} - \frac{\eps}{2},
	& = \frac{2a + 2b + d - 5 \ell / 2}{3a + 2b + d - 2 \ell} - \frac{8}{\ell}
	 \ge \frac{2a + 2b + d}{3a + 2b + d} - \frac{\eps}{2}, \nonumber % \\
%	\rho & \leq \frac{a}{3 a + 2b + d}, \nonumber
	\end{align}
	where the last line follows from \eqref{eq:truehierarchy}, \eqref{eq:unusedinti} and $1/ t_1 \ll 1/\ell \ll \alpha, \eps$.
	Rearranging, we get $\rho \leq a/(3a+2b+d)$, and recalling that $1 \le 7 \rho -  8\rho^2$ we have
	\begin{align}
	3 a + 2b + d & \leq  (7 -  8\rho) a.
	\label{eq:keyequationA}
	\end{align}
	A similar argument (by estimating $c^B_{t_1} / t_1$) shows that \begin{align}
	3 a + 2b + d \leq (7 -  8\rho)b. \label{eq:keyequationB}
	\end{align}
	
	Next, we would like to estimate $c^B_{t_2} / t_2$ and $c^R_{t_2} / t_2$.
	By the choice of~$t_1$, Lemma~\ref{lemma:rho}\ref{itm:rho3} and \eqref{eq:truenodensepathforest}, \begin{align*}
	\rho^B_{t_1}(A^B_{t_1})
	& \ge 4(1 - 8/\ell) t_1 - 2 \ell - 2(c^R_{t_1} + c^B_{t_1}) - \rho^R_{t_1}(A^R_{t_1}) \\
	& \ge 4(1 - 8/\ell) t_1 - 2 \ell - 4(1 - \alpha - \eps) t_1 - \ell \\
	& \ge 4( \alpha + 2 \eps / 3) t_1,
	\end{align*} where the last inequality follows from \eqref{eq:truehierarchy}.
	Together with Lemma~\ref{lemma:usefulfacts}\ref{itm:u7} and the choice of $t_2$ we get \begin{align}
		2 |Y^B_{t_2} \setminus Y^B_{t_1}| & \ge \rho^B_{t_1}(A^B_{t_1}) - \rho^B_{t_2}(A^B_{t_1}) \ge 4( \alpha + 2 \eps / 3) t_1 - \ell 
	\nonumber \\
	& \ge 4 \rho (3a + 2b + d). \label{eq:boundyi}
	\end{align}
	Using Claim~\ref{claim:constructionofhi}, we get
	\begin{align*}
	1 - \alpha -\eps \stackrel{ \eqref{eq:truenodensepathforest} }{\ge} \frac{c^B_{t_2}}{t_2} \;
	& \stackrel{ \mathclap{ \eqref{eq:lowerboundcovered}} }{\ge} \; \frac{ (1 - 8/\ell)( |W^R_{t_2} \cup Y^R_{t_2}| + |W^B_{t_2} \cup Y^B_{t_2}| + |X^B_{t_2} \cup Z^B_{t_2}|) - \ell / 2}{ t_2 } \\
	& \stackrel{ \mathclap{ \eqref{eq:totalattimet}} }{\ge} \; \frac{ |W^R_{t_2} \cup Y^R_{t_2}| + |W^B_{t_2} \cup Y^B_{t_2}| + |X^B_{t_2} \cup Z^B_{t_2}| - \ell / 2}{ |W^R_{t_2} \cup Y^R_{t_2}| + |W^B_{t_2} \cup Y^B_{t_2}|+ |X^B_{t_2} \cup Z^B_{t_2}| + | X^R_{t_2} \cup Z^R_{t_2}|  } - \frac{8}{\ell} \\
	& \stackrel{ \mathclap{\eqref{eq:HsubsetofWY}} }{\ge} \frac{ |H^R| + |H^B| + |Y^B_{t_2} \setminus Y^B_{t_1}| + |X^B_{t_2} \cup Z^B_{t_2}| - \ell / 2 }{ |H^R| + |H^B| + |Y^B_{t_2} \setminus Y^B_{t_1}| + |X^B_{t_2} \cup Z^B_{t_2}| + | X^R_{t_2} \cup Z^R_{t_2}|} - \frac{8}{\ell} \\
	& \stackrel{\mathclap{\eqref{eq:boundyi}}}{\ge} \; \frac{2a + b + d + 2 \rho (3a + 2b + d) + a+c - 3 \ell / 2 }{2a + b + d + 2 \rho (3a + 2b + d) + a+c + b+d - 2 \ell} - \frac{8}{\ell} \\
	& \ge \frac{3a + b + c + d + 2 \rho(3a + 2b + d)}{3a + 2b + c + 2d + 2 \rho(3a + 2b + d)} - \frac{\eps}{2}, % \\
%	\rho & \leq \frac{b+d}{(1 + 2 \rho )(3 a + 2b + d) + c+d},
	\end{align*} 
	where the last inequality follows from \eqref{eq:truehierarchy}, \eqref{eq:unusedinti} and $1/t_2 \ll 1/\ell \ll \alpha, \eps$.
	Rearranging, we get $\rho \leq (b+d)/[(1 + 2 \rho )(3 a + 2b + d) + c+d]$.
	Recalling that $1 \le 7 \rho -  8\rho^2$, we get
	\begin{align}
	(1 + 2 \rho )(3 a + 2b + d) + c+d \leq (7  -  8\rho)(b+d). \label{eq:keyequationBD}
	\end{align}
	A similar argument (by estimating $c^R_{t_2} / t_2$) shows that \begin{align}
	(1 + 2 \rho )(3 a + 2b + d) + c+d \leq (7  -  8\rho)(a+c). \label{eq:keyequationAC}
	\end{align}

	By \eqref{eq:keyequationA}, \eqref{eq:keyequationB}, \eqref{eq:keyequationBD} and \eqref{eq:keyequationAC}, we deduce that $Ax \leq 0$, where $x = (a, b, c, d)^t$ and \[ A = \left[ \begin{array}{cccc}
	8 \rho - 4 & 2 & 0 & 1 \\
	3 & 8 \rho - 5 & 0 & 1 \\
	7 \rho - 2 & 1 + 2 \rho & 4 \rho - 3 & 1 + \rho \\
	3+6\rho & 12 \rho - 5 & 1 & 10 \rho - 5
	\end{array} \right]. \]
	Now consider the column vector $y = ( 7 - 12 \alpha, 2 - 4 \alpha, 1, 3 - 4 \alpha)^t$.
	Then $y \ge 0$ and $y^t A = ( ( 81 - 120 \alpha )\delta, (54 - 80 \alpha)\delta, 4 \delta, (31 - 40 \alpha ) \delta ) \ge (\delta, \delta, \delta, \delta) > 0$.
	Since $Ax \leq 0$ and $x,y \ge 0$, we get \begin{align*}
	0 \ge (y^t A) x \ge (\delta, \delta, \delta, \delta) x = \delta (a + b + c + d) > 0,
	\end{align*} a contradiction.
\end{proof}

\subsection*{Remark}

After the submission of this paper, we learned that Corsten, DeBiasio, Lamaison and Lang~\cite{CDLL18} have obtained an improved version of Theorem~\ref{theorem:main}.

\subsection*{Acknowledgements}

We thank an anonymous referee for their helpful suggestions.

\bibliography{monochromaticpathsetc}

\end{document}